\theoremstyle{definition}
\newtheorem{definition}{Definition}[section]
\newtheorem{example}[definition]{Example}
\theoremstyle{plain}
\newtheorem{theorem}[definition]{Theorem}
\newtheorem{prop}[definition]{Proposition}
\newtheorem{cor}[definition]{Corollary}
\newtheorem{lemma}[definition]{Lemma}
\theoremstyle{remark}
\newcommand{\bb}[1]{\bm{#1}}
\newcommand{\C}{\boldsymbol{C}}
\newcommand{\x}{\boldsymbol{x}}
\newcommand{\om}{\boldsymbol{\omega}}
\newcommand{\y}{\boldsymbol{y}}
\newcommand{\h}{\boldsymbol{h}}
\newcommand{\NN}{\mathbb{N}}
\newcommand{\RR}{\ensuremath{\mathbb{R}}}
\newcommand{\EE}{\ensuremath{\mathbb{E}}}
\newcommand{\Var}{\ensuremath{\text{\normalfont Var}}}
\newcommand{\G}{{\boldsymbol \gamma}}
\newcommand{\TG}{{\boldsymbol {\tilde\gamma}}}
\title{Covariance Models for Multivariate Random Fields resulting from Pseudo Cross-Variograms }
\author{Christopher D\"orr\footnote{Institute of Mathematics, University of Mannheim, 68131 Mannheim, Germany. Email: chrdoerr@mail.uni-mannheim.de} {} and Martin Schlather\footnote{Institute of Mathematics, University of Mannheim, 68131 Mannheim, Germany.} }
\date{\today}
\begin{document}
\setstretch{1.25}

\maketitle

\begin{abstract}
So far, the pseudo cross-variogram is primarily used as a tool for the structural analysis of multivariate random fields. Mainly applying recent theoretical results on the pseudo cross-variogram, we use it as a cornerstone in the construction of valid covariance models for multivariate random fields. In particular, we extend known univariate constructions to the multivariate case, and generalize existing multivariate models. Furthermore, we provide a general construction principle for conditionally negative definite matrix-valued kernels, which we use to reinterpret previous modeling proposals.    
\end{abstract}

{\small
\noindent 
\textit{Keywords}: conditional negative definiteness;  infinite divisibility; positive definiteness; space-time covariance functions; spatial statistics
\smallskip\\
  \noindent \textit{2010 MSC}: {Primary 86A32}\\ 
  \phantom{\textit{2010 MSC}:} {Secondary 62M30; 62M40} 
}

\section{Introduction}

Multivariate data are of ever-increasing importance in today's world. They usually show
dependencies between the variables and therefore contain additional information to exploit;
so joint instead of separate modeling is needed to use them to full advantage. A popular approach in that regard is to use multivariate random fields, and to describe the dependence structure via a cross-covariance function. This way, cross-covariance functions are applied to problems in various areas, which include atmospheric science \citep{apanasovich2010, li2008, qadir2020flexible}, meteorology \citep{bourotte2016, gneiting2010, cressie2016, genton2015}, oceanography \citep{li2011}, or geology \citep{moreva2016}, for instance.

Constructing valid cross-covariance functions is a challenging task. Several approaches have been proposed, which include latent dimensions \citep{apanasovich2010}, the linear model of coregionalization \citep{goulard1992}, convolution methods \citep{ver1998,majumdar2007}, deformations \citep{sampson1992, vu2021}, multivariate adaptation \citep{gneiting2010, apanasovich2012, moreva2016, porcu2018shkarofsky}, mixtures \citep{porcu2011characterization, bourotte2016}, and a conditional approach \citep{cressie2016}. The benefits and drawbacks of the resulting models are sufficiently well-known. For instance, all referenced models stemming from the multivariate adaptation approach are symmetric, which can lead to inferior predictions, cf.\,\citet{li2011}.

Pseudo cross-variograms are useful quantities for the structural analysis of multivariate random fields. Apart from their usage in multivariate geostatistics, they also appear naturally in exteme value theory in the context of multivariate Brown-Resnick processes \citep{genton2015maxstable}. Only recently, theoretical results on pseudo cross-variograms have been established \citep{doerr2021characterization}, which bridge between pseudo cross-variograms and matrix-valued correlation functions through a matrix-valued version of Schoenberg's theorem \citep{schoenberg1938}. This intimate connection adds another dimension to the range of applications of pseudo cross-variograms, that is, the construction of valid covariance models for multivariate random fields. In this regard, pseudo cross-variograms have already been used in \citet{doerr2021characterization}, \citet{allard2022fully} and \citet{porcu2022criteria} to propose several extensions of Gneiting's popular univariate space-time covariance model \citep{gneiting2002}, thereby meeting one of the requests of \citet{chen2021space} for flexible space-time cross-covariance models.

Our aim here is to further highlight the potential of pseudo cross-variograms for the construction of (asymmetric) cross-covariance models. To this end, we present several selected extensions of univariate constructions found in the literature which can be transferred to the multivariate case via pseudo cross-variograms. We also illustrate that pseudo cross-variograms can be used to further generalize, in some sense, parsimonious cross-covariances.   

In Section~2, we briefly provide the necessary background on pseudo cross-variograms and cross-covariance functions. In Section~3, we present a general construction principle for conditionally negative definite kernels and briefly discuss some existing construction principles for pseudo cross-variograms. In the remaining sections, we present several matrix-valued covariance models, starting with mixture proposals in Section~4, followed by non-stationary models, and models involving derivatives in Sections~5 and 6. Eventually, we present a particular class of infinitely divisible matrix-valued models in Section~7.

\section{Preliminaries} \label{sec:prelim}

A matrix-valued kernel $\C: \RR^d \times \RR^d \rightarrow \RR^{m\times m}$ with $C_{ij}(\x,\y)=C_{ji}(\y,\x)$, $\x,\y \in \RR^d$, $i,j=1,\dots,m$, is called positive definite, if $$\sum_{i=1}^n\sum_{j=1}^n \bb{a_i}^\top \C(\bb{x_i},\bb{x_j})\bb{a_{j}} \ge 0$$  for all $n \in \NN$, $\bb{x_1},\dots,\bb{x_n} \in \RR^d$, $\bb{a_1},\dots,\bb{a_n} \in \RR^m$.  The class of matrix-valued positive definite kernels coincides with the class of cross-covariances for multivariate random fields. 
If the kernel $\C$ only depends on the difference $\x-\y$, then $\C$ is called a positive definite (matrix-valued) function.

For an $m$-variate random field $Z$ such that $Z_i(\x)-Z_j(\y)$ is square integrable for all $\x, \y \in \RR^d$, $i,j=1,\dots,m$, the non-stationary pseudo cross-variogram  $\G: \RR^d \times \RR^d \rightarrow \RR^{m \times m}$  
is defined via
\begin{equation*}
\gamma_{ij}(\x,\y)= \frac12\Var(Z_i(\x)-Z_j(\y)), \quad \x,\y \in \RR^d, \quad i,j=1,\dots,m.
\end{equation*}If $\Var(Z_i(\x + \h) - Z_j(\x))$ does not depend on $\x$ for all $\x, \h\in \RR^d$, $i,j=1,\dots,m$, then the pseudo cross-variogram $\G: \RR^d \rightarrow \RR^{m \times m}$ 
is given via 
\begin{equation*}
\gamma_{ij}(\h)= \frac12\Var(Z_i(\x+\h)-Z_j(\x)), \quad \x,\h \in \RR^d, \quad i,j=1,\dots,m.
\end{equation*} 
The diagonal entries of pseudo cross-variograms are univariate variograms. 
A matrix-valued kernel $\G$ is a non-stationary pseudo cross-variogram, if and only if $\gamma_{ii}(\x,\x)=0$, for all $\x \in \RR^d$, $i=1,\dots,m$, and $\G$ is a conditionally negative definite matrix-valued kernel, i.e.,\,$\gamma_{ij}(\x,\y)=\gamma_{ji}(\y,\x)$, $\x,\y \in \RR^d$, $i,j=1,\dots,m$, and
$$\sum_{i=1}^n\sum_{j=1}^n \bb{a_i}^\top \G(\bb{x_i},\bb{x_j})\bb{a_j} \le 0, 
$$
for all $n \in \NN$, $\bb{x_1},\dots,\bb{x_n} \in \RR^d$, $\bb{a_1},\dots,\bb{a_n} \in \RR^m$ such that $\bb{1_m}^\top\sum_{k=1}^n \bb{a_k} =0$ with $\bb{1_m}:=~(1,\dots, 1)^\top \in \RR^m$, see \citet{doerr2021characterization}. 

For positive definite and conditionally negative definite matrix-valued kernels, we have the following matrix-valued version of Schoenberg's theorem which we repeat separately here for ease of reference.

\begin{theorem} \label{thm:schoenberg}\citep{berg1984harmonic, doerr2021characterization}.
	A kernel $\G: \RR^d \times \RR^d \rightarrow \RR^{m\times m}$ is conditionally negative definite, if and only if $\exp^\ast(-t\G)$ is positive definite for all $t >0$, where $^\ast$ indicates componentwise application of the exponential function.
\end{theorem}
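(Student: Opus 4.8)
The plan is to collapse the matrix-valued statement onto the classical scalar Schoenberg correspondence by passing to an augmented index set. Put $E := \RR^d \times \{1,\dots,m\}$ and associate to the matrix-valued kernel $\G$ the scalar kernel $\psi \colon E \times E \to \RR$ defined by $\psi\big((\x,i),(\y,j)\big) := \gamma_{ij}(\x,\y)$. The symmetry $\gamma_{ij}(\x,\y)=\gamma_{ji}(\y,\x)$ is exactly symmetry of $\psi$, and the componentwise exponential corresponds to the ordinary one, since $\big(\exp^\ast(-t\G)\big)_{ij}(\x,\y) = \exp\big(-t\,\psi((\x,i),(\y,j))\big)$. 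Thus it suffices to show that $\G$ is conditionally negative definite in the matrix sense if and only if $\psi$ is conditionally negative definite on $E$, and likewise that matrix-valued positive definiteness of $\exp^\ast(-t\G)$ matches scalar positive definiteness of $\exp(-t\psi)$; the scalar Schoenberg theorem on the arbitrary set $E$ \citep{berg1984harmonic} then closes the argument.

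The heart of the reduction is an identification of quadratic forms. Writing $a_{i,p}$ for the $p$-th entry of $\bb{a_i}$, one has
$$\sum_{i=1}^n\sum_{j=1}^n \bb{a_i}^\top \G(\bb{x_i},\bb{x_j})\bb{a_j} = \sum_{i,j=1}^{n}\sum_{p,q=1}^{m} a_{i,p}\, \psi\big((\bb{x_i},p),(\bb{x_j},q)\big)\, a_{j,q},$$
so that the left-hand matrix-valued form is nothing but the scalar form for $\psi$ evaluated at the $nm$ points $(\bb{x_i},p) \in E$ carrying the scalar weights $c_{(i,p)} := a_{i,p}$. The same identity with $\exp^\ast(-t\G)$ in place of $\G$ translates matrix-valued into scalar positive definiteness. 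The one point that must be checked with care -- and which I expect to be the only real obstacle -- is that the admissibility constraints line up: the matrix-valued definition restricts to weights with $\bb{1_m}^\top\sum_{i=1}^n \bb{a_i}=0$, which is precisely the single scalar condition $\sum_{i=1}^n\sum_{p=1}^m c_{(i,p)} = 0$ required for scalar conditional negative definiteness on $E$. It is essential here that the constraint is this one scalar equation rather than the vector condition $\sum_i \bb{a_i}=\0$; otherwise the two notions would not coincide and the reduction would break down.

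Conversely, every finite configuration of points in $E$ with scalar weights arises from some choice of $n$, $\bb{x_1},\dots,\bb{x_n}$ and $\bb{a_1},\dots,\bb{a_n}$ (grouping the points of $E$ by their $\RR^d$-coordinate and reading off the weights component by component, filling in absent copies with zero weights), so the correspondence between admissible configurations runs in both directions. Hence $\G$ is conditionally negative definite exactly when $\psi$ is, and $\exp^\ast(-t\G)$ is positive definite exactly when $\exp(-t\psi)$ is. Invoking the scalar Schoenberg theorem for $\psi$ on $E$ -- valid since it holds for symmetric kernels on an arbitrary nonempty set -- yields the claimed equivalence for all $t>0$, completing the proof.
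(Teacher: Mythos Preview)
The paper does not give its own proof of this theorem; it merely records the statement and cites \citet{berg1984harmonic} and \citet{doerr2021characterization} for it. So there is no ``paper's proof'' to compare against.

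Your argument is correct and is in fact the standard reduction that underlies the cited result in \citet{doerr2021characterization}: one lifts the $\RR^{m\times m}$-valued kernel on $\RR^d$ to a scalar kernel on the product set $E=\RR^d\times\{1,\dots,m\}$ and then invokes the classical Schoenberg theorem from \citet{berg1984harmonic}, which is stated for symmetric kernels on an arbitrary nonempty set. The crucial observation you isolate---that the matrix-valued constraint $\bb{1_m}^\top\sum_i\bb{a_i}=0$ is exactly the single scalar condition $\sum_{i,p}a_{i,p}=0$ on $E$, rather than the stronger vector condition $\sum_i\bb{a_i}=\0$---is precisely the point that makes the matrix-valued notion of conditional negative definiteness in Section~\ref{sec:prelim} the right one for this correspondence. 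Your handling of the converse direction (grouping points of $E$ by their $\RR^d$-coordinate, padding with zero weights) is also fine; repeated points can always be merged in the scalar quadratic form without affecting its value or the constraint.
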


\section{A Construction Principle for Conditionally Negative Definite Kernels} \label{sec:pseudovar}
Examples of conditionally negative definite functions or kernels are desirable from a practical point of view and with regard to the subsequent sections, where pseudo cross-variograms are central ingredients. We first deal with a general construction principle for conditionally negative definite kernels that is essentially a slight reinterpretation of a known relation between positive definite and conditionally negative definite kernels, which we will come back to later. 

\begin{theorem}\label{thm:cndstructure}
	Let $\G: \RR^d \times \RR^d \rightarrow \RR^{m \times m}$ be a matrix-valued conditionally negative definite kernel. Then $\G$ has the form
	\begin{equation} \label{eq:cndstructure}
	\gamma_{ij}(\x,\y)= g_i(\x)+g_j(\y) - C_{ij}(\x,\y), \quad \x,\y \in \RR^d, \quad i,j=1,\dots,m, 
	\end{equation} for some functions $g_i: \RR^d \rightarrow \RR$, $i=1,\dots,m$, and a positive definite kernel $\C$.
	On the other hand, if a function $\G: \RR^d \times \RR^d \rightarrow \RR^{m\times m}$ has the form \eqref{eq:cndstructure}, then $\G$ is conditionally negative definite. In particular, if $\gamma_{ii}(\x,\x)=0$, for all $\x \in \RR^d$, $i=1,\dots,m$, then $\G$ is a non-stationary pseudo cross-variogram.
\end{theorem}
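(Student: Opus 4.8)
The plan is to reduce the matrix-valued statement to the classical scalar relation between positive and conditionally negative definite kernels (see \citet{berg1984harmonic}) by passing to an enlarged domain. Set $E := \RR^d \times \{1,\dots,m\}$ and associate with $\G$ the scalar kernel $\T\gamma$ on $E$ defined by $\T\gamma((\x,i),(\y,j)) := \gamma_{ij}(\x,\y)$. The symmetry $\gamma_{ij}(\x,\y)=\gamma_{ji}(\y,\x)$ is precisely the symmetry of $\T\gamma$, and for locations $\bb{x_k}$ with vectors $\bb{a_k}$ whose entries $a_{ki}$ serve as scalar weights $c_{(k,i)} := a_{ki}$ attached to the points $(\bb{x_k},i)$, one has
$$\sum_{k,l}\bb{a_k}^\top\G(\bb{x_k},\bb{x_l})\bb{a_l}=\sum_{(k,i),(l,j)}c_{(k,i)}c_{(l,j)}\,\T\gamma((\bb{x_k},i),(\bb{x_l},j)),$$
while the side condition $\bb{1_m}^\top\sum_k\bb{a_k}=0$ becomes $\sum_{(k,i)}c_{(k,i)}=0$. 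Hence $\G$ is conditionally negative definite in the matrix sense if and only if $\T\gamma$ is a conditionally negative definite scalar kernel on $E$. This correspondence is the conceptual heart of the argument; the remainder is bookkeeping carried out on $E$.

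For the forward direction I would invoke the scalar construction on $E$. Fix a reference point $e_0 = (\x_0,i_0) \in E$ and put
$$\T g(e):=\T\gamma(e,e_0)-\tfrac12\T\gamma(e_0,e_0),\qquad \T\varphi(e,f):=\T g(e)+\T g(f)-\T\gamma(e,f).$$
Using symmetry, $\T g(e)+\T g(f)=\T\gamma(e,e_0)+\T\gamma(e_0,f)-\T\gamma(e_0,e_0)$, so $\T\varphi(e,f)=\T\gamma(e,e_0)+\T\gamma(e_0,f)-\T\gamma(e,f)-\T\gamma(e_0,e_0)$, and $\T\gamma(e,f)=\T g(e)+\T g(f)-\T\varphi(e,f)$ holds by construction. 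To see that $\T\varphi$ is positive definite, take arbitrary points $e_1,\dots,e_N$ with weights $c_1,\dots,c_N$, append $e_0$ with weight $c_0:=-\sum_{p\ge1}c_p$ so that the augmented weights sum to zero, and apply conditional negative definiteness of $\T\gamma$; substituting $c_0$ and collecting terms turns the resulting inequality into $\sum_{p,q\ge1}c_pc_q\,\T\varphi(e_p,e_q)\ge0$. Translating back via $g_i(\x):=\T g((\x,i))$ and $C_{ij}(\x,\y):=\T\varphi((\x,i),(\y,j))$ yields the representation \eqref{eq:cndstructure} with $\C$ positive definite.

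For the converse I would compute directly. Inserting $\gamma_{ij}(\x,\y)=g_i(\x)+g_j(\y)-C_{ij}(\x,\y)$ into $\sum_{k,l}\bb{a_k}^\top\G(\bb{x_k},\bb{x_l})\bb{a_l}$ splits the sum into three pieces. The two pieces carrying the $g$-terms each factor as a product one of whose factors is $\sum_{k,i}a_{ki}=\bb{1_m}^\top\sum_k\bb{a_k}$, hence both vanish under the side condition; the remaining piece equals $-\sum_{k,l}\bb{a_k}^\top\C(\bb{x_k},\bb{x_l})\bb{a_l}\le0$ by positive definiteness of $\C$. The symmetry $\gamma_{ij}(\x,\y)=\gamma_{ji}(\y,\x)$ follows from $C_{ij}(\x,\y)=C_{ji}(\y,\x)$, so $\G$ is conditionally negative definite. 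If in addition $\gamma_{ii}(\x,\x)=0$ for all $\x$ and $i$, then $\G$ satisfies both conditions of the characterization of non-stationary pseudo cross-variograms recalled in Section~\ref{sec:prelim}, and is therefore such a pseudo cross-variogram.

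I expect the only genuine obstacle to be making the enlarged-domain correspondence airtight, in particular verifying that the matrix side condition $\bb{1_m}^\top\sum_k\bb{a_k}=0$ coincides with the scalar zero-sum condition under the identification $c_{(k,i)}=a_{ki}$. Once this identification is secured, the forward direction is exactly the classical scalar lemma and the converse is a routine three-term split, as indicated above.
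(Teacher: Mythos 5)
Your proof is correct, and the decomposition it produces is in fact identical to the paper's: with reference point $e_0=(\bb{x_0},i_0)$ and $i_0=1$, your $g_i(\x)=\T\gamma((\x,i),e_0)-\tfrac12\T\gamma(e_0,e_0)$ and $C_{ij}(\x,\y)=\T\varphi((\x,i),(\y,j))$ are exactly the paper's $g_i(\x)=\gamma_{i1}(\x,\bb{x_0})-\tfrac12\gamma_{11}(\bb{x_0},\bb{x_0})$ and $C_{ij}(\x,\y)=\gamma_{i1}(\x,\bb{x_0})+\gamma_{j1}(\y,\bb{x_0})-\gamma_{ij}(\x,\y)-\gamma_{11}(\bb{x_0},\bb{x_0})$. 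The difference is in how the two key facts are justified. The paper simply cites the matrix-valued versions from \citet{berg1984harmonic} and \citet{doerr2021characterization} (positive definiteness of $\C$ for the forward direction, and Proposition~3.1.9 of \citet{berg1984harmonic} for conditional negative definiteness of the additive separable part in the converse), whereas you make both self-contained: you reduce the matrix-valued notions to scalar kernels on the enlarged domain $E=\RR^d\times\{1,\dots,m\}$, reprove the classical Schoenberg-type lemma there by appending the reference point with weight $c_0=-\sum_{p\ge1}c_p$, and verify the converse by the direct three-term split. This buys a proof that does not lean on the matrix-valued literature, at the cost of the bookkeeping you acknowledge; the one point you should make explicit there is that the matrix-valued condition only tests ``full grid'' configurations $\{\bb{x_1},\dots,\bb{x_n}\}\times\{1,\dots,m\}$, so to get conditional negative definiteness of $\T\gamma$ on \emph{arbitrary} finite subsets of $E$ you must pad the missing pairs $(\bb{x_k},i)$ with zero weights (and conversely note that repeated base points are permitted). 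With that observation the correspondence is airtight and the argument goes through exactly as you describe.
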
     

\begin{proof} 
	Since $\G$ is conditionally negative definite, the kernel $\C: \RR^d \times \RR^d \rightarrow \RR^{m \times m}$ with $$ C_{ij}(\x,\y):=\gamma_{i1}(\x, \bb{x_0}) + \gamma_{j1}(\y,\bb{x_0}) -\gamma_{ij}(\x,\y)-\gamma_{11}(\bb{x_0},\bb{x_0}), \quad i,j=1,\dots,m,$$ is positive definite for $\bb{x_0} \in \RR^d$ \citep{berg1984harmonic,doerr2021characterization}. Choosing $g_i(\x) := \gamma_{i1}(\x,\bb{x_0})-\frac12 \gamma_{11}(\bb{x_0},\bb{x_0})$, $i=1,\dots,m$, and rearranging shows the first part. For the second part, since $-\C(\x,\y)$ is obviously conditionally negative definite, we only need to show that a matrix-valued kernel $\bb{g}: \RR^d \times \RR^d \rightarrow \RR^{m \times m}$ with $g_{ij}(\x,\y)=g_i(\x)+g_j(\y), i,j=1,\dots,m$, is conditionally negative definite, but this is a consequence of Proposition 3.1.9 in \citet{berg1984harmonic}. 
	The specification for non-stationary pseudo cross-variograms follows from Theorem~2.2 in \citet{doerr2021characterization}.
\end{proof}

Theorem~\ref{thm:cndstructure} provides a non-unique representation of conditionally negative definite matrix-valued kernels. It shows that any such kernel is the difference of an additive separable kernel and a positive definite kernel, which is an easily applicable and flexible construction principle at the same time. The functions $g_1, \dots, g_m$ can be chosen appropriately; the positive definite kernel $\C$ can be selected from the current pool of cross-covariances in the literature. In case that the cross-covariances are bounded, such as the ones derived from the deformation approach, positivity or non-negativity constraints can also be easily met by including a conditionally negative definite matrix-valued kernel $(\x,\y) \mapsto c\bb{1_m}\bb{1_m}^\top$ for some appropriate $c \in \RR$, in the additive separable structure. 

A particular example of the construction~\eqref{eq:cndstructure}, which involves non-stationary cross-variograms \citep{myers1982matrix}, is inherent in Remark~2 of \citet{schlather2010}.

\begin{cor} \label{cor:cndstructure} 
	Let $\TG: \RR^d \times\RR^d \rightarrow \RR^{m \times m}$ be a non-stationary cross-variogram. Then the kernel $\G: \RR^d \times \RR^d \rightarrow \RR^{m \times m}$, defined via 
	$$\gamma_{ij}(\x,\y) = \tilde\gamma_{ii}(\x,\bb{0})+\tilde\gamma_{jj}(\y,\bb{0})-(\tilde\gamma_{ij}(\x,\bb{0})+\tilde\gamma_{ij}(\y,\bb{0})-\tilde\gamma_{ij}(\x,\y)),\quad i,j=1,\dots,m,$$ is a non-stationary pseudo cross-variogram.
\end{cor}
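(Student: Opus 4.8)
The plan is to read the defining formula as an instance of the construction principle in Theorem~\ref{thm:cndstructure}. Setting $g_i(\x) := \tilde\gamma_{ii}(\x,\0)$ for $i=1,\dots,m$ and
$$C_{ij}(\x,\y) := \tilde\gamma_{ij}(\x,\0) + \tilde\gamma_{ij}(\y,\0) - \tilde\gamma_{ij}(\x,\y), \qquad i,j=1,\dots,m,$$
the kernel in the statement is, by construction, exactly $\gamma_{ij}(\x,\y) = g_i(\x) + g_j(\y) - C_{ij}(\x,\y)$, which is the shape of \eqref{eq:cndstructure}. Hence the whole corollary reduces to two checks: that $\C$ is a positive definite matrix-valued kernel, and that the diagonal condition $\gamma_{ii}(\x,\x)=0$ holds. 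Given both, the second part of Theorem~\ref{thm:cndstructure} delivers conditional negative definiteness of $\G$, and its final assertion upgrades this to a non-stationary pseudo cross-variogram.

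For the positive definiteness of $\C$ I would pass to the random field $Z$ underlying $\TG$, for which $\tilde\gamma_{ij}(\x,\y) = \frac12\Cov(Z_i(\x)-Z_i(\y),\,Z_j(\x)-Z_j(\y))$, and introduce the centred increments $W_i(\x) := Z_i(\x)-Z_i(\0)$, so that every $\tilde\gamma_{ij}(\cdot,\0)$ drops its $\0$-contributions. A short expansion of the three summands should then collapse $C_{ij}$ to the symmetric part of the cross-covariance of $W$, namely
$$C_{ij}(\x,\y) = \frac12\bigl(\Cov(W_i(\x),W_j(\y)) + \Cov(W_i(\y),W_j(\x))\bigr).$$
Equivalently, $\C$ is the image of $\TG$ under the matrix-valued variogram-to-covariance map anchored at $\0$, exactly as in the proof of Theorem~\ref{thm:cndstructure}; by the correspondence of \citet{berg1984harmonic,doerr2021characterization} this is positive definite provided $\TG$ is conditionally negative definite as a matrix-valued kernel. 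So I would reduce positive definiteness of $\C$ to the conditional negative definiteness of the cross-variogram $\TG$, or argue it directly from the symmetrised-covariance representation above.

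The diagonal check is routine: substituting $\x=\y$ and $i=j$ gives $\gamma_{ii}(\x,\x) = 2\tilde\gamma_{ii}(\x,\0) - \bigl(2\tilde\gamma_{ii}(\x,\0) - \tilde\gamma_{ii}(\x,\x)\bigr) = \tilde\gamma_{ii}(\x,\x) = \frac12\Var(Z_i(\x)-Z_i(\x)) = 0$. I expect the positive definiteness of $\C$ to be the real obstacle: it amounts to showing that componentwise symmetrisation of a valid cross-covariance preserves positive definiteness. In the stationary case this is transparent, since symmetrisation corresponds to retaining the even part and hence the Hermitian part of the spectral density; in the non-stationary setting one must instead lean on the variogram-covariance correspondence (or the explicit representation through $Z$, along the lines of Remark~2 of \citet{schlather2010}), and verifying that step cleanly is where the bulk of the work will lie.
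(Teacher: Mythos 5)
Your skeleton coincides with the paper's: the decomposition $g_i(\x)=\tilde\gamma_{ii}(\x,\0)$, $C_{ij}(\x,\y)=\tilde\gamma_{ij}(\x,\0)+\tilde\gamma_{ij}(\y,\0)-\tilde\gamma_{ij}(\x,\y)$, the diagonal check, and the appeal to Theorem~\ref{thm:cndstructure} are exactly what the paper does, and your identity $C_{ij}(\x,\y)=\frac12\bigl(\Cov(W_i(\x),W_j(\y))+\Cov(W_i(\y),W_j(\x))\bigr)$ is correct. The genuine gap is the step you yourself flag: the positive definiteness of $\C$, and neither of your routes closes it. The ``direct'' route amounts to claiming that the blockwise symmetrisation $\bb{K}(\x,\y)\mapsto\frac12\bigl(\bb{K}(\x,\y)+\bb{K}(\x,\y)^\top\bigr)$ of a positive definite matrix-valued kernel is again positive definite; this is a partial transpose and is false in general. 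Take $m=2$, two sites, $W_1(\bb{x_1})=W_2(\bb{x_2})=X$ with $X$ standard normal, $W_2(\bb{x_1})=W_1(\bb{x_2})=0$, $\bb{W}(\0)=\bb{0}$: then $C_{22}(\bb{x_1},\bb{x_1})=C_{11}(\bb{x_2},\bb{x_2})=0$ while $C_{21}(\bb{x_1},\bb{x_2})=\frac12$, and $\bb{a_1}=(0,1)^\top$, $\bb{a_2}=(-1,0)^\top$ give $\sum_{p,q}\bb{a_p}^\top\C(\bb{x_p},\bb{x_q})\bb{a_q}=-1$. Your spectral argument rescues only the stationary case, whereas the corollary concerns non-stationary cross-variograms; the example shows that no argument relying solely on the existence of \emph{some} underlying second-order field $Z$ can succeed.

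The fallback via the correspondence of \citet{berg1984harmonic} and \citet{doerr2021characterization} also misfires, for two reasons: a cross-variogram is in general not conditionally negative definite in the sense of Section~\ref{sec:prelim} --- the scalar constraint $\bb{1_m}^\top\sum_k\bb{a_k}=0$ is precisely what characterises \emph{pseudo} cross-variograms --- and even where that correspondence applies it produces $\gamma_{i1}(\x,\bb{x_0})+\gamma_{j1}(\y,\bb{x_0})-\gamma_{ij}(\x,\y)-\gamma_{11}(\bb{x_0},\bb{x_0})$, whose index pattern differs from that of your $C_{ij}$. What actually makes $\C$ positive definite --- and what the paper simply delegates to \citet{schlather2010} --- is the negative definiteness that characterises \emph{valid} cross-variogram matrices, namely $\sum_{p,q}\bb{a_p}^\top\TG(\bb{x_p},\bb{x_q})\bb{a_q}\le0$ whenever $\sum_p\bb{a_p}=\bb{0}\in\RR^m$ (a vector constraint, i.e.\ $m$ scalar conditions, which the field in the example above violates). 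Adjoining the origin with weight $\bb{a_0}=-\sum_p\bb{a_p}$ and using $\TG(\0,\0)=\bb{0}$ together with the symmetry of $\TG(\x,\0)$ converts this inequality directly into $\sum_{p,q}\bb{a_p}^\top\C(\bb{x_p},\bb{x_q})\bb{a_q}\ge0$. With your symmetrisation step replaced by this argument (or by the citation the paper uses), the remainder of your proof is fine.
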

\begin{proof}
	It is shown in \citet{schlather2010} that the kernel $$(\x,\y) \mapsto \TG(\x,\bb{0})+\TG(\y,\bb{0})-\TG(\x,\y), \quad \x,\y \in \RR^d,$$ is positive definite for a non-stationary cross-variogram $\TG: \RR^d \times \RR^d \rightarrow \RR^{m \times m}$.
	Since $\tilde\gamma_{ii}$ vanishes on the diagonal for all $i=1,\dots,m$, due to the properties of non-stationary cross-variograms, see \citet{du2012variogram}, for instance, the result follows immediately from Theorem~\ref{thm:cndstructure}. 
\end{proof}

Examples of (non-stationary) cross-variograms can be found in \citet{ma2011class}, \citet{ma2011vector}, \citet{du2012variogram}, \citet{arroyo2017spectral} and \citet{chen2019parametric}, for instance.
Corollary~\ref{cor:cndstructure} sheds some new light on Remark~2 in \citet{schlather2010}. It is shown there via a constructive proof that the matrix-valued kernel $$(\x,\y) \mapsto \left(\exp\left(-\tilde\gamma_{ii}(\x,\bb{0})-\tilde\gamma_{jj}(\y,\bb{0})+\tilde\gamma_{ij}(\x,\bb{0})+\tilde\gamma_{ij}(\y,\bb{0})-\tilde\gamma_{ij}(\x,\y)\right)\right)_{i,j=1,\dots,m},$$ 
$\x,\y \in \RR^d$,
is positive definite. We can now easily recover this result in an alternative manner, by simply combining Theorem~\ref{thm:schoenberg} and Corollary~\ref{cor:cndstructure}. 

The constructive approach of \citet{oesting2017} for modeling pseudo cross-vario\-grams is in similar vein to Theorem~\ref{thm:cndstructure}. Their pseudo cross-variogram proposal results from the sum of a univariate, intrinsically stationary random field and a multivariate stationary random field. This approach was recently taken up in \citet{allard2022fully} and augmented in spirit of the delay model \citep{wackernagel2003multivariate, li2011}. Both approaches are very flexible and allow for different variogram structures on the main diagonals, which makes them an appealing choice. In contrast, the pseudo cross-variogram models in \citet{chen2019parametric} based on latent dimensions have identical diagonal entries, as already pointed out by the authors. This is not favourable for our purposes here, since the pseudo cross-variograms will predominantly encode the multivariate dependence structure.  

There is also a kind of \glqq reverse statement\grqq\ of Theorem~\ref{thm:cndstructure} which solely involves pseudo cross-variograms to create positive definite matrix-valued functions. It is a matrix-valued version of an often considered construction, see \citet{berg1984harmonic}, \citet{ma2004use}, \citet{porcu2011}, or \citet{sasvari2013multivariate}, for instance, which is actually inherent in Equation~(6) in \citet{papritz1993}.

\begin{lemma} \label{lem:kernel} 
	Let $\G: \RR^d \rightarrow \RR^{m \times m}$ be a pseudo cross-variogram. Then the function $\C: \RR^d \rightarrow \RR^{m \times m}$, defined via
	
	$$\C(\h)= \G(\h + \bb{z}) + \G(\h-\bb{z}) - 2\G(\h) $$
	is a cross-covariance function for $\bb{z} \in \RR^d$.
\end{lemma}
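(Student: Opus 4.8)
The plan is to verify directly that $\C$ satisfies the two defining properties of a positive definite matrix-valued function: the symmetry $C_{ij}(\h) = C_{ji}(-\h)$ and the nonnegativity of the associated quadratic forms. The symmetry is immediate: since $\G$ is a stationary pseudo cross-variogram we have $\gamma_{ij}(\uu) = \gamma_{ji}(-\uu)$ for all $\uu \in \RR^d$, and substituting $\uu \in \{\h+\bb{z},\, \h-\bb{z},\, \h\}$ into the definition of $\C$ and relabeling gives $C_{ji}(-\h) = C_{ij}(\h)$. The substance of the lemma is therefore the positive definiteness inequality.

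The main idea is a \emph{doubling of points} trick that converts the conditional negative definiteness of $\G$ into positive definiteness of $\C$. Fix $n \in \NN$, points $\bb{x_1}, \dots, \bb{x_n} \in \RR^d$ and vectors $\bb{a_1}, \dots, \bb{a_n} \in \RR^m$; I would like to show that $\sum_{i,j} \bb{a_i}^\top \C(\bb{x_i} - \bb{x_j})\bb{a_j} \ge 0$. To this end I introduce the $2n$ points $\bb{y_k} := \bb{x_k} + \bb{z}$ and $\bb{y_{n+k}} := \bb{x_k}$ together with the vector weights $\bb{b_k} := \bb{a_k}$ and $\bb{b_{n+k}} := -\bb{a_k}$, for $k = 1, \dots, n$. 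The decisive point is that $\sum_{k=1}^{2n} \bb{b_k} = \sum_{k=1}^n \bb{a_k} - \sum_{k=1}^n \bb{a_k} = \bb{0}$, so in particular $\bb{1_m}^\top\sum_k \bb{b_k} = 0$ and the admissibility condition in the definition of conditional negative definiteness is met.

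Next I would expand the sum $\sum_{k,l=1}^{2n} \bb{b_k}^\top \G(\bb{y_k} - \bb{y_l})\bb{b_l}$ into its four $n \times n$ blocks. The two diagonal blocks each contribute $\sum_{i,j} \bb{a_i}^\top \G(\bb{x_i} - \bb{x_j})\bb{a_j}$, since $\bb{y_k} - \bb{y_l}$ reduces to $\bb{x_k} - \bb{x_l}$ on both of them; the upper-right block contributes $-\sum_{i,j} \bb{a_i}^\top \G(\bb{x_i} - \bb{x_j} + \bb{z})\bb{a_j}$ and the lower-left block $-\sum_{i,j} \bb{a_i}^\top \G(\bb{x_i} - \bb{x_j} - \bb{z})\bb{a_j}$. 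Writing $T_0$, $T_+$, $T_-$ for these three matrix-weighted sums, the whole expression equals $2T_0 - T_+ - T_-$, and conditional negative definiteness of $\G$ forces it to be $\le 0$. Rearranging gives $T_+ + T_- - 2T_0 \ge 0$, which is precisely $\sum_{i,j} \bb{a_i}^\top \C(\bb{x_i} - \bb{x_j})\bb{a_j} \ge 0$, as desired.

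I expect the only delicate point to be the bookkeeping of the cross blocks in the matrix-valued setting, where the weights $\bb{a_k}$ are vectors and one must keep the order of $\bb{a_i}^\top$ and $\bb{a_j}$ consistent when identifying the contributions with $T_+$ and $T_-$; the symmetry $\gamma_{ij}(\uu) = \gamma_{ji}(-\uu)$ guarantees that these two cross blocks are well matched but is not otherwise needed for the inequality. Everything else is routine, and the argument makes transparent that the construction is nothing but the matrix-valued analogue of the classical scalar identity relating second differences of a conditionally negative definite variogram to a covariance.
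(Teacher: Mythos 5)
Your proof is correct, but it takes a genuinely different route from the paper. The paper's proof is probabilistic and takes one line: since $\G$ is by definition the pseudo cross-variogram of some $m$-variate random field $\bb{Z}$, the increment field $\bb{Y}(\x) = \bb{Z}(\x+\bb{z}) - \bb{Z}(\x)$ is second-order stationary and a covariance computation (essentially Equation~(6) of Papritz et al., 1993, via the identity $\Cov(A-B,C-D) = \tfrac12[\Var(A-D)+\Var(B-C)-\Var(A-C)-\Var(B-D)]$) shows its cross-covariance is exactly $\C$. Your argument instead works purely analytically from the characterization in Section~2: a stationary pseudo cross-variogram is a conditionally negative definite matrix-valued function, and your doubling-of-points construction with weights $(\bb{a_k}, -\bb{a_k})$ at points $(\bb{x_k}+\bb{z}, \bb{x_k})$ satisfies the admissibility condition $\bb{1_m}^\top \sum_k \bb{b_k} = 0$ (indeed the stronger condition $\sum_k \bb{b_k} = \bb{0}$), and the block expansion $2T_0 - T_+ - T_- \le 0$ delivers the positive definiteness inequality; your symmetry check $C_{ij}(\h) = C_{ji}(-\h)$ is also needed and correct. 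What each approach buys: the paper's proof is shorter and more informative in that it exhibits $\C$ as the cross-covariance of a concrete random field (so stationarity and validity come for free, and the construction has a modeling interpretation); your proof is self-contained at the level of the analytic definition, makes the mechanism transparent (it is the matrix-valued version of the classical fact that second differences of a conditionally negative definite function are positive definite), and would apply verbatim to any conditionally negative definite matrix-valued function, whether or not one wishes to invoke a realizing random field.
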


\begin{proof}
	Let $\bb{Z}$ be an $m$-variate random field with pseudo cross-variogram $\G$. Then the random field $\bb{Y}(\x)=\bb{Z}(\x+\bb{z})-\bb{Z}(\x)$, $\x, \bb{z} \in \RR^d,$ has the desired cross-covariance structure.
\end{proof}

By means of Lemma~\ref{lem:kernel}, we obtain the following matrix-valued version of Theorem~2 of \citet{ma2004use}. 

\begin{prop}
	Let $\G: \RR^d \rightarrow \RR^{m \times m}$ be a pseudo cross-variogram. Then the function $\C: \RR^d \rightarrow \RR^{m \times m}$, defined via
	$$C_{ij}(\h) = \frac{(1 + \gamma_{ij}(\h + \bb{z}))(1 + \gamma_{ij}(\h - \bb{z}))  }{ (1 + \gamma_{ij}(\h))^2} + c, \quad c \ge -1, \quad i,j=1,\dots,m,$$ 
	is positive definite for any $\bb{z} \in \RR^d$.
\end{prop}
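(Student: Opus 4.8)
The plan is to reduce the statement to Lemma~\ref{lem:kernel} and the matrix-valued Schoenberg theorem by first passing through the logarithmic transform. I would begin by showing that the kernel $\TG: \RR^d \rightarrow \RR^{m\times m}$ with entries $\tilde\gamma_{ij}(\h) := \log(1+\gamma_{ij}(\h))$ is again a pseudo cross-variogram. Each entry $\gamma_{ij}(\h)=\tfrac12\Var(\cdots)\ge 0$, so the transform is well defined, and $\tilde\gamma_{ii}(\0)=\log 1=0$. Since $t\mapsto \log(1+t)$ is a Bernstein function with representation $\log(1+t)=\int_0^\infty (1-e^{-st})\tfrac{e^{-s}}{s}\,ds$, I get entrywise $\tilde\gamma_{ij}(\h)=\int_0^\infty\bigl(1-e^{-s\gamma_{ij}(\h)}\bigr)\tfrac{e^{-s}}{s}\,ds$. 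For each fixed $s>0$, Theorem~\ref{thm:schoenberg} gives that $\exp^\ast(-s\G)$ is positive definite; the kernel $\bb{1_m}\bb{1_m}^\top-\exp^\ast(-s\G)$ is then conditionally negative definite, since for coefficients $\bb{a}_k$ with $\bb{1_m}^\top\sum_k\bb{a}_k=0$ the constant part contributes zero and the remaining term is $\le 0$ by positive definiteness of $\exp^\ast(-s\G)$. Integrating against the nonnegative weight $\tfrac{e^{-s}}{s}\,ds$ preserves conditional negative definiteness, so $\TG$ is conditionally negative definite with vanishing diagonal at $\0$, hence a pseudo cross-variogram.

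Next I would apply Lemma~\ref{lem:kernel} to $\TG$: the kernel $\tilde\C(\h):=\TG(\h+\bb{z})+\TG(\h-\bb{z})-2\TG(\h)$ is a cross-covariance function, i.e.\ positive definite, for every $\bb{z}\in\RR^d$. Now I exponentiate entrywise. Because $\tilde\C$ is positive definite, every Schur power $\tilde\C^{\odot n}$ is positive definite, by the matrix-valued Schur product theorem, which follows from the scalar one applied to the $nm\times nm$ block Gram matrices $(\tilde\C(\x_i,\x_j))_{i,j}$; moreover positive definiteness is preserved under the absolutely convergent series and its limits. Hence
$$\exp^\ast(\tilde\C)=\bb{1_m}\bb{1_m}^\top+\sum_{n\ge 1}\frac{1}{n!}\tilde\C^{\odot n}$$
is positive definite, and the remainder $\sum_{n\ge 1}\tfrac{1}{n!}\tilde\C^{\odot n}$ is itself positive definite.

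Using $\exp(\tilde\gamma_{ij}(\h))=1+\gamma_{ij}(\h)$, the entries of $\exp^\ast(\tilde\C)$ are exactly $\frac{(1+\gamma_{ij}(\h+\bb{z}))(1+\gamma_{ij}(\h-\bb{z}))}{(1+\gamma_{ij}(\h))^2}$, so that
$$\C(\h)=\exp^\ast(\tilde\C)(\h)+c\,\bb{1_m}\bb{1_m}^\top=(1+c)\,\bb{1_m}\bb{1_m}^\top+\sum_{n\ge 1}\frac{1}{n!}\tilde\C^{\odot n}(\h).$$
Both summands are positive definite precisely when $1+c\ge 0$, i.e.\ $c\ge -1$, which completes the argument. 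This is exactly where the threshold $-1$ enters: it compensates the constant $n=0$ term $\bb{1_m}\bb{1_m}^\top$ of the exponential series.

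I expect the main obstacle to be the first step, namely transferring conditional negative definiteness through the Bernstein function $\log(1+\cdot)$ in the matrix-valued setting: the scalar Schoenberg-type reasoning must be recast via the matrix-valued Theorem~\ref{thm:schoenberg} together with the auxiliary constant-matrix kernel $\bb{1_m}\bb{1_m}^\top$. Tracking that same constant term through the exponential series is then what pins down the admissible range $c\ge -1$, so the two nontrivial points of the proof are really two appearances of the same $\bb{1_m}\bb{1_m}^\top$ bookkeeping.
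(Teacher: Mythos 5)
Your proof is correct and follows essentially the same route as the paper's: apply $\log(1+\cdot)$ entrywise to obtain a new pseudo cross-variogram, use Lemma~\ref{lem:kernel} to get a positive definite second-difference kernel, exponentiate componentwise, and absorb the constant $c\ge -1$ into the $n=0$ term of the exponential series. The only difference is one of citation versus re-derivation: you prove the two auxiliary facts (that Bernstein functions vanishing at zero operate on pseudo cross-variograms, and that $\exp^\ast$ of a positive definite kernel minus $\bb{1_m}\bb{1_m}^\top$ is positive definite) from scratch, where the paper cites \citet{doerr2021characterization} and Theorem~1 of \citet{schlather2010}, respectively.
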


\begin{proof}
	The function $t \mapsto \log(1+t), t \ge 0$, is the continuous extension of a Bernstein function which vanishes at zero \citep{schilling2012}. Since Bernstein functions vanishing at zero operate on pseudo cross-variograms \citep{berg1984harmonic, doerr2021characterization}, the function $$\h \mapsto \left(\log(1+\gamma_{ij}(\h))\right)_{i,j=1,\dots,m}, \quad \h \in \RR^d,$$ is again a pseudo cross-variogram. 
	
	Consequently, the function $$\h \mapsto  \left(\log(1+\gamma_{ij}(\h+\bb{z}))+\log(1+\gamma_{ij}(\h-\bb{z}))-2\log(1+\gamma_{ij}(\h))\right)_{i,j=1,\dots,m}, \h \in \RR^d,$$ is positive definite for any $\bb{z} \in \RR^d$ due to Lemma~\ref{lem:kernel}. Applying Theorem~1 in \citet{schlather2010} shows that $$ \h \mapsto \left(\frac{(1 + \gamma_{ij}(\h + \bb{z}))(1 + \gamma_{ij}(\h - \bb{z}))  }{ (1 + \gamma_{ij}(\h))^2} -1\right)_{i,j=1,\dots,m}, $$ is a positive definite matrix-valued function. Since $ \h \mapsto \bb{1_m1_m}^\top$ is positive definite, and the set of positive definite matrix-valued functions forms a convex cone, the function $\C$ is positive definite as well. 
\end{proof}

\section{Multivariate Space-Time Covariance Models}

Covariance models for multivariate spatio-temporal random fields are special positive definite matrix-valued functions on $\RR^d \times \RR$. In the following, we focus on the construction of valid multivariate models based on univariate ones. For properties of the respective univariate models and motivating arguments for the constructions, we refer to the corresponding literature. Since most of the constructions in the sequel are not only valid on $\RR^d \times \RR$, but also on $\RR^d \times \RR^k$ with $k \ge 1$, we also present them in their general forms.       

Mixtures of separable space-time models are a simple and often used approach to construct non-separable space-time  univariate covariance functions, i.e.,\,covariance functions which allow for space-time interactions. 
Recalling that the componentwise product of positive definite matrix-valued functions is again positive definite, see \citet{schlather2010}, for instance, we can immediately generalize Theorem~3 in \citet{ma2003spatio}. 

\begin{theorem} \label{mixture} 
	Let $\G^S: \RR^d \rightarrow \RR^{m \times m}$ and $\G^T: \RR^k \rightarrow \RR^{m \times m}$  be two pseudo cross-variograms. Let $\mathcal{L}_{ij}$, $i,j=1,\dots,m$, be the two-dimensional Laplace transform of a finite, for $i\neq j$ potentially signed measure $\mu_{ij}$ on $[0,\infty)^2$, i.e.,\,$$\mathcal{L}_{ij}(x,y)=\int_{[0,\infty)^2} \exp\left(-vx - wy \right) d\mu_{ij}(v,w), \quad x,y \ge 0.$$ Assume that $\mu_{ij}$ has a density $f_{ij}$ with respect to Lebesgue measure such that the matrix of densities $\left(f_{ij}(v,w)\right)_{i,j=1,\dots,m}$ is symmetric and positive semi-definite for all $v,w\ge 0$. Then the function $\C: \RR^d\times \RR^k \rightarrow \RR^{m \times m}$ defined via
	$$C_{ij}(\h,\bb{u})= \mathcal{L}_{ij}\left(\G_{ij}^S(\h),\G_{ij}^T(\bb{u})\right), \quad i,j=1,\dots,m, $$ is positive definite.
\end{theorem}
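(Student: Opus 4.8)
The plan is to represent each entry $C_{ij}$ as a two-dimensional Laplace-type integral against the density $f_{ij}$, and then to show that the integrand defines a positive definite matrix-valued function for each fixed pair of integration variables, so that positive definiteness of $\C$ follows by integrating a non-negative quadratic form. Concretely, using the density assumption I would write
$$C_{ij}(\h,\uu)=\int_{[0,\infty)^2} \exp\!\left(-v\,\G^S_{ij}(\h)\right)\exp\!\left(-w\,\G^T_{ij}(\uu)\right) f_{ij}(v,w)\,dv\,dw,$$
which is legitimate because $\G^S_{ij}(\h),\G^T_{ij}(\uu)\ge 0$ (pseudo cross-variogram entries are variances, hence non-negative), so the arguments lie in the domain of $\mathcal{L}_{ij}$.

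Next, for fixed $v,w>0$ I would identify the matrix-valued integrand as a componentwise product of three positive definite matrix-valued functions on $\RR^d\times\RR^k$. By Theorem~\ref{thm:schoenberg}, $\exp^\ast(-v\G^S)$ is positive definite on $\RR^d$ and $\exp^\ast(-w\G^T)$ is positive definite on $\RR^k$; each extends trivially to a positive definite function on $\RR^d\times\RR^k$ by ignoring the other coordinate. The constant matrix $(f_{ij}(v,w))_{i,j}$ is, by hypothesis, symmetric and positive semi-definite, hence a (constant) positive definite matrix-valued function. Recalling that the componentwise product of positive definite matrix-valued functions is again positive definite \citep{schlather2010}, the map
$$(\h,\uu)\mapsto\left(f_{ij}(v,w)\exp\!\left(-v\,\G^S_{ij}(\h)\right)\exp\!\left(-w\,\G^T_{ij}(\uu)\right)\right)_{i,j=1,\dots,m}$$
is positive definite for every fixed $v,w\ge 0$.

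Finally I would pass the quadratic form inside the integral. For any $n\in\NN$, points $(\h_p,\uu_p)$ and vectors $\bb{a_p}\in\RR^m$, Fubini's theorem---justified since each $\mu_{ij}$ is finite and the exponential factors are bounded by $1$---gives
$$\sum_{p,q=1}^n \bb{a_p}^\top \C\!\left(\h_p-\h_q,\uu_p-\uu_q\right)\bb{a_q}=\int_{[0,\infty)^2}\sum_{p,q=1}^n\bb{a_p}^\top \bb{K}_{v,w}\!\left(\h_p-\h_q,\uu_p-\uu_q\right)\bb{a_q}\,dv\,dw,$$
where $\bb{K}_{v,w}$ denotes the matrix-valued function displayed above. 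The inner sum is non-negative for each $v,w$ by the previous step, so the integral is non-negative and $\C$ is positive definite.

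The step I expect to require the most care is the second one, precisely because the off-diagonal measures $\mu_{ij}$ are allowed to be signed. One therefore cannot argue entrywise that $C_{ij}$ is a scalar mixture of positive definite functions; the positivity has to be extracted jointly from the full matrix $(f_{ij}(v,w))_{i,j}$ via its positive semi-definiteness, coupled through the Schur product with the two Schoenberg factors. Verifying the measurability and integrability needed for Fubini is routine given the finiteness of the measures and the uniform bound on the exponential terms.
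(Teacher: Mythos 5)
Your proof is correct and follows essentially the same route as the paper's: write each entry as a mixture over the density, combine Theorem~\ref{thm:schoenberg} for the two exponential factors with the pointwise positive semi-definiteness of $(f_{ij}(v,w))_{i,j}$ via the Schur product, and conclude by integrating the non-negative quadratic form. Your explicit treatment of the Fubini step and of why the signed off-diagonal measures force a joint (rather than entrywise) argument is a welcome elaboration of what the paper leaves implicit, but it is the same proof.
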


\begin{proof}
We follow the proof in \citet{ma2003spatio}. We have	
	\begin{align*}
	C_{ij}(\h,\bb{u}) &= \int_{[0,\infty)^2} \exp\left(-v\gamma^S_{ij}(\h) - w\gamma^T_{ij}(\bb{u}) \right) d\mu_{ij}(v,w)\\
				&= \int_{[0,\infty)^2} \exp\left(-v\gamma^S_{ij}(\h)\right) \exp\left(-w\gamma^T_{ij}(\bb{u})\right)f_{ij}(v,w)   d(v,w).
	\end{align*}

Due to Theorem~\ref{thm:schoenberg}, $\exp^\ast\left(-v\G^S\right)$ and $\exp^\ast\left(-w\G^T\right)$ are positive definite functions for all $v,w \ge 0$; so is their componentwise product, as already mentioned. Since $(f_{ij}(v,w))_{i,j=1,\dots,m},$ is positive semi-definite for all $v,w >0$, $\C$ is a mixture of positive definite functions, hence positive definite itself.   
\end{proof} 
Examples of two-dimensional Laplace transforms can be found in \citet{ma2003spatio}. Other specific choices in the above theorem give matrix-valued versions of some proposals for univariate space-time covariance models. 
Choosing a nested structure $\G^S(\h) = \sum_{i=1}^d \bb{\gamma_i}^{S}(h_i)$ for pseudo cross-variograms $\G_i^S$ on $\RR$ leads to a space-time cross-covariance function in light of Proposition 1 in \citet{porcu2007covariance}. We also obtain a matrix-valued version of Proposition 3.1 in \citet{fonseca2011general}. 

\begin{cor}
	Let $\G^S: \RR^d \rightarrow \RR^{m \times m}$ and $\G^T: \RR^k \rightarrow \RR^{m \times m}$  be two pseudo cross-variograms. Let $\mathcal{L}_0, \mathcal{L}_1, \mathcal{L}_2$ denote the Laplace transforms of some independent non-negative random variables $X_0, X_1, X_2$. Then the function $\C: \RR^d \times \RR^k \rightarrow \RR^{m \times m}$ with
	$$C_{ij}(\h,\bb{u})= \mathcal{L}_0\left(\gamma^S_{ij}(\h)+ \gamma_{ij}^T(\bb{u})\right) \mathcal{L}_1\left(\gamma_{ij}^S(\h)\right) \mathcal{L}_2\left(\gamma_{ij}^T(\bb{u})\right), \quad i,j=1,\dots,m,$$ is a cross-covariance function.
\end{cor}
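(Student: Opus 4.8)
The plan is to adapt the proof of Theorem~\ref{mixture}, treating each of the three Laplace-transform factors as a mixture of positive definite matrix-valued functions and then invoking closure of the positive definite cone under componentwise products. Writing $F_0, F_1, F_2$ for the distribution functions of the non-negative random variables $X_0, X_1, X_2$, I would start from the representations $\mathcal{L}_s(t)=\int_{[0,\infty)}\exp(-xt)\,dF_s(x)$, $s=0,1,2$, and handle the three factors separately.

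For the factor $\mathcal{L}_1$, Schoenberg's theorem (Theorem~\ref{thm:schoenberg}) guarantees that $\exp^\ast(-x\G^S)=(\exp(-x\gamma^S_{ij}(\h)))_{i,j=1,\dots,m}$ is positive definite for every $x\ge 0$, since $\G^S$ is a pseudo cross-variogram and hence conditionally negative definite. Integrating this pointwise family against $dF_1$ exhibits $(\mathcal{L}_1(\gamma^S_{ij}(\h)))_{i,j}$ as a mixture of positive definite functions, hence positive definite; the same argument applied to $\G^T$ and $F_2$ shows that $(\mathcal{L}_2(\gamma^T_{ij}(\bb{u})))_{i,j}$ is positive definite. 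The only factor requiring care is $\mathcal{L}_0$: here I would use that the sum in the argument splits the exponential as $\exp(-x(\gamma^S_{ij}(\h)+\gamma^T_{ij}(\bb{u})))=\exp(-x\gamma^S_{ij}(\h))\,\exp(-x\gamma^T_{ij}(\bb{u}))$, so that for each fixed $x\ge 0$ the matrix $(\exp(-x(\gamma^S_{ij}(\h)+\gamma^T_{ij}(\bb{u}))))_{i,j}$ is the componentwise product of the two positive definite functions $\exp^\ast(-x\G^S)$ and $\exp^\ast(-x\G^T)$, viewed on $\RR^d\times\RR^k$, and therefore positive definite. Integrating against $dF_0$ then yields positive definiteness of $(\mathcal{L}_0(\gamma^S_{ij}(\h)+\gamma^T_{ij}(\bb{u})))_{i,j}$.

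With the three factors shown to be positive definite, $\C$ is their componentwise product and is therefore positive definite as well, which is the desired conclusion. The main subtlety, and the reason I would argue directly rather than simply quoting Theorem~\ref{mixture}, concerns precisely the cross term $\mathcal{L}_0$: by independence one can write $\mathcal{L}_0(s+t)\mathcal{L}_1(s)\mathcal{L}_2(t)=\EE[\exp(-s(X_0+X_1)-t(X_0+X_2))]$, so the whole product is the bivariate Laplace transform of $(X_0+X_1,X_0+X_2)$ with a single common measure $\mu_{ij}=\mu$, whose density matrix would be $f\cdot\bb{1_m}\bb{1_m}^\top$ and hence positive semi-definite. However, this $\mu$ need not possess a Lebesgue density (it may concentrate on a lower-dimensional set, e.g.\ the diagonal when $X_1=X_2=0$), so the density hypothesis of Theorem~\ref{mixture} can fail. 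The direct mixture argument above sidesteps this issue and, incidentally, reveals that independence of $X_0,X_1,X_2$ is needed only for the single-Laplace-transform interpretation and not for the positive definiteness itself.
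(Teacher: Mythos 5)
Your proof is correct, but it deliberately departs from the paper's argument, and the comparison is instructive. The paper's proof is a one-line reduction: for every $i,j$ take $\mu_{ij}$ in Theorem~\ref{mixture} to be the joint law of $(V,W)^\top=(X_0+X_1,X_0+X_2)^\top$, whose bivariate Laplace transform is exactly $\mathcal{L}_0(s+t)\mathcal{L}_1(s)\mathcal{L}_2(t)$ by independence --- precisely the identity you noted at the end. You instead bypass Theorem~\ref{mixture} and verify positive definiteness factor by factor: each of the three matrices $\left(\mathcal{L}_s(\cdot)\right)_{i,j}$ is a mixture, over the law of $X_s$, of matrices of the form $\exp^\ast(-x\G)$ (for $\mathcal{L}_0$, a Schur product of two such matrices), which are positive definite by Theorem~\ref{thm:schoenberg}; closure of the positive definite cone under mixtures and componentwise products then finishes the proof. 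Both arguments rest on the same two pillars (Schoenberg plus Schur-product and mixture stability), so yours is not conceptually distant, but your stated reason for avoiding the reduction is a genuine point: Theorem~\ref{mixture} as formulated requires the $\mu_{ij}$ to admit a Lebesgue density with pointwise positive semi-definite density matrix, and the law of $(X_0+X_1,X_0+X_2)$ can be singular with respect to Lebesgue measure on $[0,\infty)^2$ (your example $X_1=X_2=0$, concentrating on the diagonal, or any discrete choice of the $X_s$). The paper's later remark that \grqq we chose $f_{ij}=f$ as density matrix\grqq\ glosses over this; the hypothesis holds in the concrete application of Corollary~\ref{cor:fonseca} (Gamma and generalized inverse Gaussian laws are absolutely continuous) but not for all random variables allowed by the corollary's statement. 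What the reduction buys the paper is brevity; what your direct argument buys is that the corollary holds in the stated generality without amending Theorem~\ref{mixture} (the amendment would be routine: for a common nonnegative mixing measure $\mu_{ij}=\mu$ no density is needed). Your closing observation that independence is only used to identify the product as a single bivariate Laplace transform, not for positive definiteness, is also correct and slightly sharpens the statement.
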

\begin{proof}
	For all $i,j=1,\dots,m$, consider the Laplace transform of the random vector $(V,W)^\top=(X_0+X_1,X_0+X_2)^\top$ in Theorem~\ref{mixture}.
\end{proof}

For instance, let $X_0,X_2$ be Gamma distributed and let $X_1$ be generalized inverse Gaussian distributed. Then we get the following multivariate version of Theorem~3.1 in \citet{fonseca2011general}.

\begin{cor} \label{cor:fonseca}
  Let $\G^S: \RR^d \rightarrow \RR^{m \times m}$, $\G^T: \RR^k \rightarrow \RR^{m \times m}$ be two pseudo cross-variograms. Then the function $\C: \RR^d \times \RR^k \rightarrow \RR^{m \times m}$ with 
  \begin{eqnarray*}
    C_{ij}(\h,\bb{u})
    &=&\left(1+\frac{\gamma^S_{ij}(\h) +
        \gamma^T_{ij}(\bb{u})}{a_0}\right)^{-\lambda_0}\left(1+\frac{\gamma^S_{ij}(\h)}{a_1}\right)^{-\lambda_1/2}
        \left(1+\frac{\gamma^T_{ij}(\bb{u})}{a_2}\right)^{-\lambda_2} \times
        \\&&\displaystyle
        \frac{K_{\lambda_1}\left(2\sqrt{\left(a_1+\gamma_{ij}^S(\h)\right)\delta}\right)}{K_{\lambda_1}\left(2\sqrt{a_1\delta}\right)
        }
        ,  
  \end{eqnarray*}
$i,j=1,\dots,m$, is positive definite for $a_0,a_1,a_2, \lambda_0, \lambda_1, \lambda_2, \delta >0$, where $K_{\lambda_1}$ denotes the modified Bessel function of the second kind of order $\lambda_1$.
\end{cor}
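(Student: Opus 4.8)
The plan is to specialize the previous corollary, in which the three Laplace transforms $\mathcal{L}_0,\mathcal{L}_1,\mathcal{L}_2$ arise from independent non-negative random variables $X_0,X_1,X_2$. I would take $X_0$ and $X_2$ to be Gamma distributed and $X_1$ to follow a generalized inverse Gaussian (GIG) law, and then simply read off the product structure $\mathcal{L}_0(\gamma^S_{ij}(\h)+\gamma^T_{ij}(\bb{u}))\,\mathcal{L}_1(\gamma^S_{ij}(\h))\,\mathcal{L}_2(\gamma^T_{ij}(\bb{u}))$ supplied by that corollary, matching it term by term with the asserted expression for $C_{ij}(\h,\bb{u})$.

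First I would recall that a Gamma random variable with shape $\alpha>0$ and scale $1/a>0$ has Laplace transform $s\mapsto(1+s/a)^{-\alpha}$. Choosing $X_0\sim\text{Gamma}(\lambda_0,1/a_0)$ then produces the first factor $(1+(\gamma^S_{ij}(\h)+\gamma^T_{ij}(\bb{u}))/a_0)^{-\lambda_0}$, evaluated at the pooled argument $\gamma^S_{ij}(\h)+\gamma^T_{ij}(\bb{u})$, while $X_2\sim\text{Gamma}(\lambda_2,1/a_2)$ yields the factor $(1+\gamma^T_{ij}(\bb{u})/a_2)^{-\lambda_2}$. These two steps are immediate.

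The crux is the GIG factor. I would invoke the known Laplace transform of a $\text{GIG}(\lambda,\chi,\psi)$ variable,
$$\EE[e^{-sX}] = \left(\frac{\psi}{\psi+2s}\right)^{\lambda/2}\frac{K_\lambda\bigl(\sqrt{\chi(\psi+2s)}\bigr)}{K_\lambda\bigl(\sqrt{\chi\psi}\bigr)},$$
and then match parameters so that this equals $\mathcal{L}_1$. Imposing $\psi(1+s/a_1)=\psi+2s$ forces $\psi=2a_1$; demanding $\sqrt{\chi(\psi+2s)}=2\sqrt{(a_1+s)\delta}$ then forces $\chi=2\delta$, after which the normalizing denominator becomes $K_{\lambda_1}(\sqrt{\chi\psi})=K_{\lambda_1}(2\sqrt{a_1\delta})$, exactly as in the statement. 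Hence $X_1\sim\text{GIG}(\lambda_1,2\delta,2a_1)$ delivers the factor $(1+\gamma^S_{ij}(\h)/a_1)^{-\lambda_1/2}\,K_{\lambda_1}(2\sqrt{(a_1+\gamma^S_{ij}(\h))\delta})/K_{\lambda_1}(2\sqrt{a_1\delta})$.

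Finally I would observe that the constraints $a_0,a_1,a_2,\lambda_0,\lambda_1,\lambda_2,\delta>0$ guarantee that all three laws are well-defined (positive shape and scale for the Gamma variables; $\chi,\psi>0$ for the GIG variable), that they may be taken independent, and that the resulting product of Laplace transforms is precisely $C_{ij}(\h,\bb{u})$. Positive definiteness is then inherited directly from the previous corollary, with no further work required. The only genuine obstacle is the bookkeeping in the GIG parameter matching above; every other ingredient is a routine specialization.
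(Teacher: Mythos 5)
Your proof is correct and follows exactly the route the paper intends: the paper introduces this corollary with the sentence ``let $X_0,X_2$ be Gamma distributed and let $X_1$ be generalized inverse Gaussian distributed'' and leaves the Laplace-transform bookkeeping implicit, which is precisely what you carry out (and your parameter matching $\psi=2a_1$, $\chi=2\delta$ is accurate). Nothing further is needed.
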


In the above corollaries, we chose $f_{ij}= f$, $i,j=1,\dots,m$, for a suitable non-negative function $f$ as density matrix of the two-dimensional Laplace transforms. For this particular choice, the matrix $\left(f_{ij}(u,v)\right)_{i,j=1,\dots,m}$ is positive semi-definite for all $u,v\ge 0$ as required. Ensuring positive semi-definiteness of the matrix $\left(f_{ij}(u,v)\right)_{i,j=1,\dots,m}$ for all $u,v$ is the main challenge in Theorem~\ref{mixture}. There are proposals in the geostatistical literature involving product separable functions, for instance, i.e.,\,functions of the form $g_{ij}(\h)=\sqrt{g_i(\h)g_j(\h)}$, $\h \in \RR^d$, $i,j=1,\dots,m$, which form positive semi-definite matrices for all $\h$, see \citet{allard2022fully}. They can be found in \citet{qadir2021semiparametric} or, as a starting point, in \citet{allard2022fully}, for instance. The latter point out that the product separable structure may lead to weaker cross-correlations \citep{allard2022fully}.
Another interesting option is to use the well-known result that a twice continuously differentiable function on an open and convex domain is convex, if and only if its Hessian matrix is positive semi-definite on its entire domain, see \citet{BoydStephenP2009Co}, for instance. Here, finding appropriate convex functions which lead to appealing and closed-form cross-covariance functions, seems to be the main obstacle. A toy example to illustrate this point is the following.

\begin{example}
	Let $f: (0,\infty)^2 \rightarrow \RR$ with $f(v,w)= \frac{v^2}{w}$. Then $f$ is convex on $(0,\infty)^2$, and its Hessian matrix is given by
	$$  \nabla\nabla^\top f(v,w)= \left(\begin{array}{c c}
	2/w & -2v/w^2 \\ -2v/w^2  & 2v^2/w^3  
	\end{array}\right), $$ see \citet[p. 73]{BoydStephenP2009Co}.
	
	Choosing $\mu_{ij}(dv,dw) = \left(\nabla\nabla^\top f(v,w)\right)_{ij}\bb{1}_{(1,2)^2}(v,w) dvdw$, $i,j=1,\dots,m$, gives the two-dimensional Laplace transforms 
	\begin{eqnarray*}
		\mathcal{L}_{11}(x,y) &=& \displaystyle{\frac{2e^{-2x}(e^x - 1)(\mathrm{Ei}(-2y)-\mathrm{Ei}(-y))}{x}}, \\
		\mathcal{L}_{12}(x,y) &=&\displaystyle{\frac{e^{-2(x+y)}(-3xye^{2(x+y)}+2e^{x+y}-2e^y-2e^x+2)}{2xy}},\\
		\mathcal{L}_{22}(x,y) &=& \displaystyle{\frac1{4x^3}(-4x(x+1)+e^x(x(x+2)+2)-2)\times}\\ &&\displaystyle{(4e^{2y}y^2(\mathrm{Ei}(-2y)-\mathrm{Ei}(-y))+2y-4e^y(y-1)-1)e^{-2(y+x)}},
	\end{eqnarray*}
	$x,y \neq 0$, where $\mathrm{Ei}$ denotes the exponential integral function.
\end{example}

In similar fashion to Theorem~\ref{mixture}, the results in \citet{ma2003families} can be extended to the multivariate case by using a pseudo cross-variogram instead of a variogram. Exemplarily, we present a few multivariate versions of the models.
 
 \begin{prop} \label{prop:maternmixture}
 	Let $\G^S: \RR^d \rightarrow \RR^{m \times m}$ and $\G^T: \RR^k \rightarrow \RR^{m \times m}$ be two pseudo cross-variograms. Let $\nu_{ij}=\frac{\nu_{ii}+\nu_{jj}}2$, $i,j=1,\dots,m$, with $\nu_{11},\dots, \nu_{mm} >0$. Then the function $\C: \RR^d \times \RR^k \rightarrow \RR^{m \times m}$ with 
 	$$C_{ij}(\h,\bb{u})= \left(\frac{\gamma_{ij}^S(\h)}{1+ \gamma_{ij}^T(\bb{u})}\right)^{\nu_{ij}/2}K_{\nu_{ij}}\left(\sqrt{\gamma_{ij}^S(\h)(1+\gamma_{ij}^T(\bb{u}))}\right), \quad i,j=1,\dots,m,$$ is positive definite.
 \end{prop}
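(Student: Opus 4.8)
The plan is to represent each entry $C_{ij}$ as a scale mixture of products of exponentials of pseudo cross-variograms, and then to invoke Theorem~\ref{thm:schoenberg} together with the stability of positive definiteness under componentwise products. The starting point is the classical integral representation of the modified Bessel function of the second kind,
$$\int_0^\infty t^{\nu-1}\exp\left(-\frac{a}{t}-bt\right)dt = 2\left(\frac ab\right)^{\nu/2}K_\nu\!\left(2\sqrt{ab}\right),\qquad a,b,\nu>0.$$
Applying this with $a=\gamma^S_{ij}(\h)/2$ and $b=(1+\gamma^T_{ij}(\bb{u}))/2$ — chosen precisely so that $a/b=\gamma^S_{ij}(\h)/(1+\gamma^T_{ij}(\bb{u}))$ and $2\sqrt{ab}=\sqrt{\gamma^S_{ij}(\h)(1+\gamma^T_{ij}(\bb{u}))}$ — I would rewrite
$$C_{ij}(\h,\bb{u}) = \frac12\int_0^\infty t^{\nu_{ij}-1}\,e^{-t/2}\,\exp\!\left(-\frac{\gamma^S_{ij}(\h)}{2t}\right)\exp\!\left(-\frac{t}{2}\gamma^T_{ij}(\bb{u})\right)dt,$$
with convergence guaranteed at $t\to0$ by $\nu_{ij}>0$ and at $t\to\infty$ by the exponential factor.

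Next I would fix $t>0$ and read off the matrix-valued integrand componentwise. By Theorem~\ref{thm:schoenberg}, the conditional negative definiteness of $\G^S$ and $\G^T$ makes $\exp^\ast(-\tfrac{1}{2t}\G^S(\cdot))$ and $\exp^\ast(-\tfrac t2\G^T(\cdot))$ positive definite functions for every fixed $t>0$. The only entry-dependent weight is $t^{\nu_{ij}-1}$, and here the constraint $\nu_{ij}=(\nu_{ii}+\nu_{jj})/2$ pays off: writing $t^{\nu_{ij}-1}=t^{-1}\,t^{\nu_{ii}/2}\,t^{\nu_{jj}/2}$ shows that $\big(t^{\nu_{ij}-1}\big)_{i,j=1,\dots,m}=t^{-1}\bb{a}\bb{a}^\top$ with $\bb{a}=(t^{\nu_{11}/2},\dots,t^{\nu_{mm}/2})^\top$, a rank-one, hence positive semidefinite, matrix for each $t>0$.

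Consequently, for each fixed $t>0$ the integrand is the componentwise product of the constant positive semidefinite matrix $(t^{\nu_{ij}-1})_{i,j=1,\dots,m}$ with the two positive definite functions above (the scalar $\tfrac12 e^{-t/2}>0$ is harmless), so by the stability of positive definiteness under componentwise products \citep{schlather2010} it is a positive definite matrix-valued function of $(\h,\bb{u})$. Since $\C$ is the integral of these functions against the positive measure $\tfrac12 e^{-t/2}\,dt$ on $(0,\infty)$, it is itself positive definite, which completes the argument.

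The main obstacle, and the step that makes the parametrization work, is the positive semidefiniteness of the weight matrix $(t^{\nu_{ij}-1})_{i,j=1,\dots,m}$: this is exactly where the arithmetic-mean condition on the smoothness parameters $\nu_{ij}=(\nu_{ii}+\nu_{jj})/2$ is indispensable, mirroring the familiar constraint in the multivariate Mat\'ern model. A secondary point requiring care is selecting the integral representation of $K_\nu$ whose parameters decouple $\gamma^S$ and $1+\gamma^T$ cleanly into an $\exp^\ast(-\tfrac1{2t}\G^S)$ factor and an $\exp^\ast(-\tfrac t2\G^T)$ factor; once this representation is in hand, the remainder is routine.
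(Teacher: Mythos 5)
Your proof is correct and follows essentially the same route as the paper: both rewrite $C_{ij}$ via the Bessel integral representation (Gradshteyn--Ryzhik 3.471.9) as a scale mixture of $\exp(-a(t)\gamma^S_{ij})\exp(-b(t)\gamma^T_{ij})$ weighted by the rank-one positive semidefinite matrix $(t^{\nu_{ij}-1})_{i,j}$ coming from $\nu_{ij}=(\nu_{ii}+\nu_{jj})/2$, and then invoke Theorem~\ref{thm:schoenberg} and Schur products (your parametrization is the paper's up to the substitution $t=2\omega$). The only cosmetic difference is that the paper regularizes with $\varepsilon\bb{1_m}\bb{1_m}^\top$ added to $\G^S$ and lets $\varepsilon\to 0$ at the end, which cleanly covers the points where $\gamma^S_{ij}(\h)=0$; your integral still converges there, so this changes nothing of substance.
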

 
 \begin{proof} We follow the ideas in \citet{ma2003families}. Let $\varepsilon >0$. We have
\begin{eqnarray} 
 	\lefteqn{\int_0^\infty
  2^{\nu_{ij}-1}\Gamma(\nu_{ij})\exp\left(-\frac{\varepsilon +
  \gamma^S_{ij}(\h)}{4\omega}-\gamma^T_{ij}(\bb{u})\omega\right)\frac1{\Gamma(\nu_{ij})}\omega^{\nu_{ij}-1}\exp(-\omega)d\omega}
  \nonumber \qquad
  \\
 		&=& \int_0^\infty 2^{\nu_{ij}-1}\omega^{\nu_{ij}-1}\exp\left(-\frac{\varepsilon + \gamma^S_{ij}(\h)}{4\omega}-(1+\gamma^T_{ij}(\bb{u}))\omega\right)d\omega \nonumber \\
 	&=&	\left(\frac{\varepsilon + \gamma_{ij}^S(\h)}{1+ \gamma_{ij}^T(\bb{u})}\right)^{\nu_{ij}/2}K_{\nu_{ij}}\left(\sqrt{\left(\varepsilon+\gamma_{ij}^S(\h)\right)\left(1+\gamma_{ij}^T(\bb{u})\right)}\right) \label{eq:maternmixture}
 	\end{eqnarray} 
 	due to Formula~(3.471.9) in \citet{gradshteyn2014}. Since $$(\h, \bb{u}) \mapsto \frac1{4\omega}\left(\varepsilon\bb{1_m}\bb{1_m}^\top + \G^S(\h)\right) + \omega\G^T(\bb{u})$$ is a pseudo cross-variogram for all $\omega >0$, the function $$(\h,\bb{u}) \mapsto \left(\exp\left(-\frac{\varepsilon + \gamma^S_{ij}(\h)}{4\omega}-\gamma^T_{ij}(\bb{u})\omega\right)\right)_{i,j=1,\dots,m}$$ is positive definite due to Theorem~\ref{thm:schoenberg}. Further, the matrix $\left((2\omega)^{\nu_{ij}-1}\exp(-\omega)\right)_{i,j=1,\dots,m}$ is positive semi-definite for all $\omega >0$ by construction. Hence, Equation~\eqref{eq:maternmixture} is positive definite as mixture of positive definite functions. Letting $\varepsilon \rightarrow 0$ gives the result.
 \end{proof}
 
 \begin{prop} \label{prop:gaussianextended}
 	Let $\G^1,\dots, \G^n: \RR^k \rightarrow \RR^{m \times m}$ be pseudo cross-variograms. Let $\bb{\Sigma}$ be a symmetric, positive definite matrix, and let $\bb{\Sigma_1}, \dots, \bb{\Sigma_n}$ be symmetric and positive semi-definite matrices. Define $$\bb{A_{ij}}(\bb{u}):=\bb{\Sigma}+\sum_{\ell=1}^n\G^{\ell}_{ij}(\bb{u})\bb{\Sigma_{\ell}}, \quad i,j=1,\dots,m.$$ Then the function $\C: \RR^d \times \RR^k \rightarrow \RR^{m \times m}$ with 
 	$$C_{ij}(\h,\bb{u}) = \left\lvert\bb{A_{ij}}(\bb{u})\right\rvert^{-1/2}\exp\left(-\frac12 \h^\top  \bb{A_{ij}}(\bb{u})^{-1}\h\right), \quad i,j=1,\dots,m, $$ is positive definite. 
 \end{prop}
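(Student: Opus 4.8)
The plan is to pass to the spectral domain via the Gaussian Fourier pair and to verify positive definiteness one frequency at a time. First I would record that everything is well defined: since each $\bb{\Sigma_{\ell}}$ is positive semi-definite and each $\gamma^\ell_{ij}(\bb{u})\ge 0$ (pseudo cross-variograms are nonnegative), the matrix $\bb{A_{ij}}(\bb{u})=\bb{\Sigma}+\sum_\ell\gamma^\ell_{ij}(\bb{u})\bb{\Sigma_{\ell}}$ is symmetric with $\bb{A_{ij}}(\bb{u})\succeq\bb{\Sigma}\succ 0$, so its determinant and inverse exist and $C_{ij}$ is real. The Gaussian density/characteristic-function pair then gives, for every $i,j$,
\begin{equation*}
C_{ij}(\h,\bb{u})=(2\pi)^{-d/2}\int_{\RR^d}\cos(\om^\top\h)\exp\left(-\tfrac12\om^\top\bb{A_{ij}}(\bb{u})\om\right)d\om ,
\end{equation*}
where the sine part drops out because $\om\mapsto\exp(-\tfrac12\om^\top\bb{A_{ij}}(\bb{u})\om)$ is even. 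The integral converges thanks to $\bb{A_{ij}}(\bb{u})\succeq\bb{\Sigma}$.

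Next I would substitute the definition of $\bb{A_{ij}}(\bb{u})$ to factor the integrand,
$$\exp\left(-\tfrac12\om^\top\bb{A_{ij}}(\bb{u})\om\right)=\exp\left(-\tfrac12\om^\top\bb{\Sigma}\om\right)\prod_{\ell=1}^n\exp\left(-\tfrac12\left(\om^\top\bb{\Sigma_{\ell}}\om\right)\gamma^\ell_{ij}(\bb{u})\right),$$
and observe that the first factor is a nonnegative scalar $w(\om):=\exp(-\tfrac12\om^\top\bb{\Sigma}\om)$ depending neither on $i,j$ nor on $\bb{u}$. Fixing $\om$ and setting $c_\ell:=\tfrac12\om^\top\bb{\Sigma_{\ell}}\om\ge 0$ (here positive semi-definiteness of $\bb{\Sigma_{\ell}}$ enters), each $c_\ell\G^\ell$ is conditionally negative definite, so by Theorem~\ref{thm:schoenberg} the kernel $\exp^\ast(-c_\ell\G^\ell)$ is a positive definite matrix-valued function of $\bb{u}$. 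Since componentwise products of positive definite matrix-valued functions are again positive definite \citep{schlather2010}, the matrix $K^{(\om)}(\bb{u}):=\big(\prod_\ell\exp(-c_\ell\gamma^\ell_{ij}(\bb{u}))\big)_{i,j=1,\dots,m}$ is positive definite in $\bb{u}$. Writing the integrand matrix as the componentwise product of $\cos(\om^\top\h)\bb{1_m}\bb{1_m}^\top$ (positive definite in $\h$, constant in $\bb{u}$) and $K^{(\om)}(\bb{u})$ (positive definite in $\bb{u}$, constant in $\h$), a further application of the product rule shows that for each fixed $\om$ the integrand is a positive definite matrix-valued function on $\RR^d\times\RR^k$. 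Multiplying by $w(\om)\ge 0$ and integrating, $\C$ is a mixture of positive definite matrix-valued functions with the common nonnegative scalar weight $(2\pi)^{-d/2}w(\om)$, hence positive definite; interchanging the finite quadratic-form sum with the $\om$-integral is immediate by linearity once convergence is ensured.

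The conceptual crux is the spectral step: once one sees that for each frequency $\om$ the scalar $\tfrac12\om^\top\bb{\Sigma_{\ell}}\om$ is nonnegative, Schoenberg's theorem applies to $c_\ell\G^\ell$ and converts the otherwise unwieldy matrix inverse and determinant into a genuine product of positive definite kernels in $\bb{u}$; everything afterwards uses only tools already recorded in the paper. The one small point to check by hand is that $\cos(\om^\top\h)\bb{1_m}\bb{1_m}^\top$ is positive definite, which follows from $\sum_{p,q}\cos(\om^\top(\h_p-\h_q))\,s_ps_q=\big\lvert\sum_p s_p e^{i\om^\top\h_p}\big\rvert^2\ge 0$ with $s_p=\bb{1_m}^\top\bb{a_p}$, keeping the whole argument within the real-vector definition of positive definiteness used here.
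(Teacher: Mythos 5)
Your proof is correct and follows essentially the same route as the paper's: the Gaussian Fourier pair reduces $C_{ij}$ to a mixture over $\om$ of $\cos(\om^\top\h)$ times $\exp\bigl(-\tfrac12\sum_\ell\gamma^\ell_{ij}(\bb{u})\,\om^\top\bb{\Sigma_\ell}\om\bigr)$, with Theorem~\ref{thm:schoenberg} supplying positive definiteness in $\bb{u}$ and the Schur product and mixture closure finishing the argument. Your factorization over $\ell$ and the explicit checks (nonnegativity of $\om^\top\bb{\Sigma_\ell}\om$, invertibility of $\bb{A_{ij}}(\bb{u})$, positive definiteness of the cosine) only make explicit what the paper leaves implicit.
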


\begin{proof}
	Again, we follow the ideas in \citet{ma2003families}. We have for a constant $c>0$ that
	\begin{align*}
	C_{ij}(\h,\bb{u})&= c\left\lvert\bb{A_{ij}}(\bb{u})\right\rvert^{-1/2} \int_{\RR^d} \exp(\imath \om^\top\h ) \left\lvert\bb{A_{ij}}(\bb{u})^{-1} \right\rvert^{-1/2}\exp\left(-\frac12 \om^\top \bb{A_{ij}}(\bb{u}) \om \right)  d\om \\
	&= c \int_{\RR^d} \cos(\om^\top\h )\exp\left(-\frac12 \om^\top \bb{A_{ij}}(\bb{u}) \om \right)  d\om, \quad i,j=1,\dots,m.
	\end{align*}
	For any $\om \in \RR^d$, the function $\bb{u} \mapsto \left(\exp\left(-\frac12\sum_{\ell=1}^n\G^{\ell}_{ij}(\bb{u})\om^\top\bb{\Sigma_{\ell}}\om\right)\right)_{i,j=1,\dots,m}$ is positive definite due to Theorem~\ref{thm:schoenberg}; so is $ \h \mapsto \cos(\om^\top \h)$. Hence, $\C$ is a mixture of positive definite functions and hence positive definite itself.
\end{proof}

\begin{prop} \label{prop: lagrangianmixture}
	Let $\G^1, \dots, \G^n$, $\bb{\Sigma}, \bb{\Sigma_1},\dots, \bb{\Sigma_n}$, and $\bb{A_{ij}}$, $i,j=1,\dots,m$, be as in Proposition~\ref{prop:gaussianextended} with $k=1$. Let $\bb{\theta} \in \RR^d$, and let $\mathcal{L}_{ij}$ denote the Laplace transform of a finite, for $i\neq j$ potentially signed measure $\mu_{ij}$ on $[0,\infty)$, $i,j=1,\dots,m$. Assume that $\mu_{ij}$ has a density $f_{ij}$ with respect to Lebesgue measure such that the matrix of densities $\left(f_{ij}(\omega)\right)_{i,j=1,\dots,m}$ is symmetric and positive semi-definite for all $\omega\ge 0$. Then the function $\C: \RR^d \times \RR \rightarrow \RR^{m \times m}$ with
	$$C_{ij}(\h,u) = \left\lvert\bb{A_{ij}}(u)\right\rvert^{-1/2}\mathcal{L}_{ij}\left(\frac12(\h + \bb{\theta}u)^\top \bb{A_{ij}}(u)^{-1}  (\h + \bb{\theta}u)\right),$$ $i,j=1,\dots,m$, is positive definite.
\end{prop}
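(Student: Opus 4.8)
The plan is to represent $\C$ as a scale mixture of Lagrangian-shifted Gaussian kernels and to reduce positive definiteness to Proposition~\ref{prop:gaussianextended}. Expanding the Laplace transform through its density and pulling the $s$-free prefactor $\lvert\bb{A_{ij}}(u)\rvert^{-1/2}$ under the integral, I would write
\begin{equation*}
C_{ij}(\h,u)=\int_0^\infty \lvert\bb{A_{ij}}(u)\rvert^{-1/2}\exp\left(-\frac{s}{2}(\h+\bb{\theta}u)^\top \bb{A_{ij}}(u)^{-1}(\h+\bb{\theta}u)\right) f_{ij}(s)\,ds,\quad i,j=1,\dots,m.
\end{equation*}
Here each $\bb{A_{ij}}(u)$ is positive definite, because the entries of a pseudo cross-variogram are non-negative variances and $\bb{\Sigma},\bb{\Sigma_1},\dots,\bb{\Sigma_n}$ are positive (semi-)definite, so the integrand is well defined.

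Next I would fix $s>0$ and identify the unshifted inner kernel, up to a positive constant, with an instance of Proposition~\ref{prop:gaussianextended}. Applying that proposition with $\bb{\Sigma}$ and $\bb{\Sigma_\ell}$ replaced by $s^{-1}\bb{\Sigma}$ (still positive definite) and $s^{-1}\bb{\Sigma_\ell}$ (still positive semi-definite) replaces $\bb{A_{ij}}(u)$ by $s^{-1}\bb{A_{ij}}(u)$, whence $\lvert s^{-1}\bb{A_{ij}}(u)\rvert^{-1/2}=s^{d/2}\lvert\bb{A_{ij}}(u)\rvert^{-1/2}$ and $\frac12\h^\top(s^{-1}\bb{A_{ij}}(u))^{-1}\h=\frac{s}{2}\h^\top \bb{A_{ij}}(u)^{-1}\h$. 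Dividing by the positive constant $s^{d/2}$ shows that
\begin{equation*}
(\h,u)\mapsto \left(\lvert\bb{A_{ij}}(u)\rvert^{-1/2}\exp\left(-\frac{s}{2}\h^\top \bb{A_{ij}}(u)^{-1}\h\right)\right)_{i,j=1,\dots,m}
\end{equation*}
is positive definite for every fixed $s>0$.

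The key and least routine step is to incorporate the Lagrangian shift $\h\mapsto\h+\bb{\theta}u$, and I expect this to be the main obstacle. I would argue that this transformation preserves positive definiteness of a matrix-valued function on $\RR^d\times\RR$: if $\bb{M}$ is positive definite, then for points $(\h_p,u_p)$, $p=1,\dots,N$, and vectors $\bb{a_p}\in\RR^m$, putting $\h_p':=\h_p+\bb{\theta}u_p$ gives $\h_p'-\h_q'=(\h_p-\h_q)+\bb{\theta}(u_p-u_q)$ while the temporal lags $u_p-u_q$ are unchanged, so that $\sum_{p,q}\bb{a_p}^\top \bb{M}(\h_p-\h_q+\bb{\theta}(u_p-u_q),u_p-u_q)\bb{a_q}=\sum_{p,q}\bb{a_p}^\top \bb{M}(\h_p'-\h_q',u_p-u_q)\bb{a_q}\ge 0$; moreover the symmetry $C_{ij}(\h,u)=C_{ji}(-\h,-u)$ is readily seen to be preserved. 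Equivalently, one may pass to a random field $\bb{Z}$ with cross-covariance $\bb{M}$ and consider $\bb{Z}(\x+\bb{\theta}t,t)$. Applying this to the kernel of the previous display makes the shifted integrand positive definite for every fixed $s>0$.

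Finally I would assemble the mixture. For fixed $s>0$, the componentwise product of this shifted positive definite kernel with the constant positive semi-definite matrix $(f_{ij}(s))_{i,j=1,\dots,m}$ is again positive definite, since the componentwise product of positive definite matrix-valued functions is positive definite, cf.\,\citet{schlather2010}, and a constant positive semi-definite matrix defines a (degenerate) positive definite function. Integrating over $s\in(0,\infty)$ against Lebesgue measure, i.e.\,forming a mixture of positive definite functions with a non-negative mixing measure, then yields positive definiteness of $\C$; the finiteness of each $\mu_{ij}$ together with the bound $\lvert\bb{A_{ij}}(u)\rvert^{-1/2}$ on the Gaussian factor justifies interchanging the finite defining sum with the integral via Fubini's theorem.
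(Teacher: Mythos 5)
Your proof is correct and follows essentially the same route as the paper's: both express $\C$ as a mixture over the scale parameter of the Lagrangian-shifted Gaussian kernel from Proposition~\ref{prop:gaussianextended} against the positive semi-definite density matrix $\left(f_{ij}\right)_{i,j=1,\dots,m}$. You merely make explicit two steps the paper compresses into a citation of Proposition~\ref{prop:gaussianextended} and the univariate version in Ma (2003b), namely absorbing the scale factor by rescaling $\bb{\Sigma},\bb{\Sigma_1},\dots,\bb{\Sigma_n}$ and verifying that the substitution $\h\mapsto\h+\bb{\theta}u$ preserves matrix-valued positive definiteness.
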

\begin{proof}
	The function $$(\h,u) \mapsto \left(\left\lvert \bb{A_{ij}}(u)\right\rvert^{-1/2}\exp\left(-\frac12 \omega(\h+\bb{\theta}u)^\top \bb{A_{ij}}(u)^{-1}(\h+\bb{\theta}u)\right)     \right)_{i,j=1,\dots,m} $$ is positive definite for all $\omega \ge 0$ due to Proposition~\ref{prop:gaussianextended}, cf.\,also the univariate version in \citet{ma2003families}. Hence, $\C$ is a mixture of positive definite functions and thus positive definite itself. 
\end{proof}

The cross-covariance function in Proposition~\ref{prop: lagrangianmixture} resembles a model from the Lagrangian framework \citep{salvana2020}, but we do not start from a purely spatial cross-covariance function here. 
Similarly, we can obtain a multivariate analogue of model (19) in \citet{porcu2006nonseparable}, which is based on the Lagrangian framework.

 \begin{prop}
 	Let $\G^1: \RR^{d_1} \rightarrow \RR^{m \times m}$, $\G^2: \RR^{d_2} \rightarrow \RR^{m \times m}$ be purely spatial pseudo cross-variograms, and let $\mathcal{L}$ denote the two-dimensional Laplace transform of a random vector. Let $d_1, d_2 \in \NN$ and $d=d_1+d_2$, and let $\bb{V}=(\bb{V_1},\bb{V_2})$ be a random vector in $\RR^{d_1}\times \RR^{d_2}$. Then the function $\C: \RR^d \times \RR \rightarrow \RR^{m \times m}$ defined via 
 	$$ C_{ij}(\h,u) = \EE_{\bb{V_1},\bb{V_2}} \mathcal{L}\left(\gamma^1_{ij}(\bb{h_1}-\bb{V_1}u),\gamma^2_{ij}(\bb{h_2}-\bb{V_2}u) \right), \h=(\bb{h_1},\bb{h_2}) \in \RR^d, u \in \RR, $$
 	$i,j=1,\dots,m,$ is a space-time cross-covariance function.
 \end{prop}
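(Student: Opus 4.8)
The plan is to separate the two independent sources of randomness and to reduce the claim, via Theorem~\ref{thm:schoenberg}, to the positive definiteness of one deterministic building block on the product domain $\RR^d=\RR^{d_1}\times\RR^{d_2}$. Writing $\mathcal{L}(x,y)=\int_{[0,\infty)^2}\exp(-vx-wy)\,d\mu(v,w)$ for the law $\mu$ of the underlying non-negative random vector, I would first rewrite
$$
C_{ij}(\h,u)=\EE_{\bb{V_1},\bb{V_2}}\int_{[0,\infty)^2}\exp\!\left(-v\gamma^1_{ij}(\bb{h_1}-\bb{V_1}u)-w\gamma^2_{ij}(\bb{h_2}-\bb{V_2}u)\right)d\mu(v,w),
$$
so that it suffices to prove positive definiteness of the matrix-valued kernel obtained by fixing $v,w\ge 0$ and a velocity realization, and then to average.

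The first step is to note that for fixed $v,w\ge 0$ the function $\h\mapsto v\G^1(\bb{h_1})+w\G^2(\bb{h_2})$ is a pseudo cross-variogram on $\RR^d$: conditional negative definiteness is preserved under non-negative scaling, under addition, and under lifting a kernel on $\RR^{d_\ell}$ to $\RR^d$ through the coordinate projection $\h\mapsto\bb{h_\ell}$, while the diagonal still vanishes at the origin. Theorem~\ref{thm:schoenberg} then shows that
$$
\h\mapsto\left(\exp\!\left(-v\gamma^1_{ij}(\bb{h_1})-w\gamma^2_{ij}(\bb{h_2})\right)\right)_{i,j=1,\dots,m}
$$
is a positive definite matrix-valued function on $\RR^d$.

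The second step is the Lagrangian ``frozen field'' argument: if $\bb{K}$ is positive definite on $\RR^d$ and $\bb{\nu}\in\RR^d$ is fixed, then $(\h,u)\mapsto\bb{K}(\h-\bb{\nu}u)$ is positive definite on $\RR^d\times\RR$, because for any space-time nodes $(\h_k,u_k)$ the associated quadratic form equals, after the substitution $\x_k:=\h_k-\bb{\nu}u_k$, the defining quadratic form of $\bb{K}$ at the points $\x_k$. Applying this with $\bb{\nu}=(\bb{V_1},\bb{V_2})$ to the function from the first step shows that every fixed realization of $(v,w,\bb{V_1},\bb{V_2})$ yields a positive definite function on $\RR^d\times\RR$.

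It remains to integrate out the randomness. Since $\mu$ is a non-negative probability measure on $[0,\infty)^2$ and $(\bb{V_1},\bb{V_2})$ is a random vector, the displayed expression for $\C$ is a mixture of positive definite matrix-valued functions against probability measures; as these functions form a convex cone closed under such mixtures, $\C$ is positive definite, hence a cross-covariance function. The conceptual heart of the argument, and the only part requiring real care, is the interplay of the first two steps: recognizing that $v\G^1(\bb{h_1})+w\G^2(\bb{h_2})$ is a genuine pseudo cross-variogram on the full product space so that Schoenberg's theorem applies, and then transporting the resulting purely spatial positive definiteness to space-time via the deterministic velocity shift; the final averaging is routine.
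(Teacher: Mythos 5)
Your proof is correct and follows essentially the same route as the paper: a Laplace-transform mixture representation, Schoenberg's theorem (Theorem~\ref{thm:schoenberg}), the frozen-velocity substitution $(\h,u)\mapsto \h-\bb{\nu}u$, and closure of positive definite matrix-valued functions under mixtures. The only (immaterial) difference is the order of operations: the paper applies the Lagrangian shift at the variogram level, observing that $\G^i(\bb{h_i}-\bb{v_i}u)$ is a pseudo cross-variogram on $\RR^{d_i}\times\RR$, and then argues as in Theorem~\ref{mixture}, whereas you exponentiate first on $\RR^d$ and then shift the resulting positive definite function; both work because precomposition with a linear map preserves the relevant definiteness property.
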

 
 \begin{proof}
 	Since $\G^1$ and $\G^2$ are purely spatial pseudo cross-variograms, the functions $\tilde\G^i(\bb{h_i},u)=\G^i(\bb{h_i}-\bb{v_i}u)$, $\bb{h_i}, \bb{v_i} \in \RR^{d_i}$, $i=1,2$, are pseudo cross-variograms in $\RR^{d_i} \times \RR$. The assertion then follows with similar arguments as in the proof of Theorem~\ref{mixture}. 
 \end{proof}
 
 \section{Non-stationary spatial models}
 
 So far, we have only used pseudo cross-variograms, resulting in stationary models. 
 But since Theorem~\ref{thm:schoenberg} holds for conditionally negative definite kernels in general, there is also a non-stationary version of Theorem~\ref{mixture}, for instance.
 Besides, non-stationary models can be derived from stationary ones. In the univariate case, \citet{stein2005jasa} advocates the use of stationary models for the construction of non-stationary ones. In the following, we combine both ideas, using stationary models alongside conditionally negative definite kernels to generalize a matrix-valued positive definite kernel in \citet{kleiber2015}.  
 
 \begin{prop}\label{porcukleibernichtstat}
 	Let $\G: \RR^d \times \RR^d \rightarrow (0,\infty)^{m \times m}$ be a positive, conditionally negative definite matrix-valued kernel. Then the matrix-valued kernel $\C: \RR^d \times \RR^d \rightarrow \RR^{m \times m}$, defined via 
 	\begin{equation} \label{eq:porcukleibernichtstat}
 	C_{ij}(\x,\y) =
 	s^{\nu+1}B(\gamma_{ij}(\x,\y)+1,\nu+1)\Psi_{\nu+\gamma_{ij}(\x,\y)+1}\left(\frac{\lVert \x-\y \rVert}{s}\right), \quad \x,\y \in \RR^d,  
 	\end{equation} 
 	$i,j =1,\dots,m$, with $\Psi_{\nu}(t) := (1-t)_{+}^\nu :=\max\{1-t,0\}^\nu$, and $B$ denoting the Beta function,
  is positive definite for $s>0$, $\nu \ge \frac{d+1}2$, and compactly supported. 
 \end{prop}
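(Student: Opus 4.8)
The plan is to write each entry $C_{ij}$ as a scale mixture over a parameter $u>0$ of products of two positive definite building blocks: the matrix $\exp^\ast(-u\G)$, which is positive definite by Theorem~\ref{thm:schoenberg}, and a compactly supported Askey (truncated power) covariance on $\RR^d$, whose validity forces the condition $\nu\ge(d+1)/2$. Once such a representation is found, positive definiteness follows from the facts that the componentwise product of positive definite matrix-valued functions is again positive definite and that nonnegative mixtures of positive definite functions are positive definite.

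First I would exhibit the representation. Writing $g=\gamma_{ij}(\x,\y)>0$ and $t=\lVert\x-\y\rVert/s$, the Beta integral $\int_t^1 (x-t)^{g}(1-x)^{\nu}\,dx=(1-t)_+^{\,g+\nu+1}B(g+1,\nu+1)$ gives
\[
C_{ij}(\x,\y)=s^{\nu+1}\int_0^1\bigl(x-\lVert\x-\y\rVert/s\bigr)_+^{\gamma_{ij}(\x,\y)}(1-x)^{\nu}\,dx .
\]
Substituting $x=\lVert\x-\y\rVert/s+e^{-u}$ (so that $(x-t)_+^{g}=e^{-ug}$ on $\{x>t\}$ and $1-x=(s-\lVert\x-\y\rVert)/s-e^{-u}$) turns this into
\[
C_{ij}(\x,\y)=\int_0^\infty e^{-u\gamma_{ij}(\x,\y)}\,\Omega_u\!\bigl(\lVert\x-\y\rVert\bigr)\,du,\qquad \Omega_u(h)=s\,e^{-u}\bigl(R_u-h\bigr)_+^{\nu},
\]
with range $R_u:=s\,(1-e^{-u})\in(0,s)$, the truncation $(\,\cdot\,)_+$ absorbing the lower cutoff in $u$. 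The crucial observation is that the surviving exponent on the Euclidean truncated power is exactly $\nu$, so $\Omega_u(h)=s\,e^{-u}R_u^{\nu}\,\Psi_\nu(h/R_u)$ is, up to the positive constant $s\,e^{-u}R_u^{\nu}$, an Askey function of range $R_u$.

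Then I would assemble the pieces. For each fixed $u>0$, $\exp^\ast(-u\G)$ is positive definite by Theorem~\ref{thm:schoenberg}, and $\x,\y\mapsto\Omega_u(\lVert\x-\y\rVert)$ is a scalar positive definite function on $\RR^d$, because the truncated power $\Psi_\nu(\lVert\cdot\rVert/R_u)$ is positive definite on $\RR^d$ precisely when $\nu\ge(d+1)/2$, which is the stated hypothesis. Since a scalar positive definite function times the rank-one matrix $\bb{1_m}\bb{1_m}^\top$ yields a positive definite matrix-valued function, the integrand $\bigl(e^{-u\gamma_{ij}}\Omega_u\bigr)_{i,j=1,\dots,m}$ is the componentwise product of two positive definite matrix-valued functions, hence positive definite. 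Integrating the associated (finite) quadratic forms over $u\in(0,\infty)$ against Lebesgue measure preserves nonnegativity, so $\C$ is positive definite. Compact support is immediate, since $\Psi_{\nu+\gamma_{ij}+1}(\lVert\x-\y\rVert/s)$, and hence $C_{ij}(\x,\y)$, vanishes once $\lVert\x-\y\rVert\ge s$.

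The main obstacle is the discovery and verification of the integral representation in the second step: the change of variables must be carried out so that the exponent $\gamma_{ij}$ is carried by the Schoenberg factor $e^{-u\gamma_{ij}}$ while the Euclidean variable is left with the fixed exponent $\nu$; it is precisely this bookkeeping that produces the sharp threshold $\nu\ge(d+1)/2$ rather than a weaker bound. A minor technical point is the interchange of the finite quadratic-form sum with the $u$-integral, which is justified by the finiteness of $C_{ij}$ (given explicitly by the Beta expression) together with the strict positivity of the entries of $\G$, ensuring $\Omega_u$ is integrable against $e^{-u\gamma_{ij}}\le 1$.
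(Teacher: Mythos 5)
Your proof is correct and is essentially the paper's argument: both represent $C_{ij}$ as a one-parameter scale mixture of an Askey function (which supplies the condition $\nu\ge(d+1)/2$) against the Schoenberg factor $\exp(-u\gamma_{ij})$; indeed your $u$-integral is exactly the paper's mixture $\int_0^\infty \Psi_\nu(\lVert\x-\y\rVert/t)\,t^\nu(1-t/s)_+^{\gamma_{ij}(\x,\y)}\,dt$ after the substitution $t=s(1-e^{-u})$. The only difference is that you verify the closed-form identity explicitly via the Beta integral, where the paper defers to the calculations in Kleiber and Porcu.
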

 
The cross-covariance model~\eqref{eq:porcukleibernichtstat} has been originally formulated in \citet{kleiber2015} for $\gamma_{ij}(\x,\y) = (\gamma_{i}(\x)+\gamma_j(\y))/2, \x,\y\in\RR^d, i,j=1,\dots,m$, for positive valued functions $\gamma_{i}$, $i=1,\dots,m$, i.e.,\, for a purely additive separable, positive, conditionally negative definite kernel. Proposition~\ref{porcukleibernichtstat}, however, shows that we can replace this specific kernel by a general one, thereby allowing for interactions between different locations and introducing more flexibility.

\begin{proof} 
 We follow the ideas in \citet{kleiber2015}, and adjust at one place. Consider the matrix-valued kernel $$C_{ij}(\x,\y)= \int_0^\infty \Psi_\nu\left(\frac{\lVert \x-\y\rVert }{t} \right) t^\nu \left(1-\frac{t}{s}\right)_{+}^{\gamma_{ij}(\x,\y)} dt, \quad \x,\y \in \RR^d, \quad i,j=1,\dots,m.$$	

The function $(\x,\y) \mapsto \Psi_\nu\left(\frac{\lVert \x-\y\rVert }{t} \right)$ is positive definite for $t>0$, $\nu \ge \frac{d+1}2$ \citep{gneiting2002compactly}. Since $\left(1-{t/s}\right)_{+}^{\gamma_{ij}(\x,\y)} = 0$ for $t \ge s$, and
$$ \left(1-\frac{t}{s}\right)_{+}^{\gamma_{ij}(\x,\y)} = \exp\left(\log\left(1-\frac{t}{s}\right)      \gamma_{ij}(\x,\y)  \right)$$  with $ - \log\left(1-{t/s}\right) >0$ for $t \in (0,s)$, the matrix-valued kernel $$(\x,\y) \mapsto \left(\left(1-\frac{t}{s}\right)_{+}^{\gamma_{ij}(\x,\y)}\right)_{i,j=1,\dots,m}, \quad \x,\y \in \RR^d, $$ is positive definite for all $s,t >0$ due to Theorem~\ref{thm:schoenberg}. Theorem~1 in \citet{porcu2011characterization} entails that the matrix-valued kernel $\C$ as defined above is a matrix-valued positive definite function. The same calculations as in \citet{kleiber2015} eventually show that $\C$ equals Equation \eqref{eq:porcukleibernichtstat}.
\end{proof}

Additive separable conditionally negative definite kernels are also basic ingredients in the quasi-arithmetic constructions in Theorem~1 and Theorem~5 in \citet{kleiber2015} which is intimately connected with Theorem~1 in \citet{kleiber2012nonstationary}. Both fall into the category of covariance kernels of the form
\begin{eqnarray*}
  (\x,\y) &\mapsto& \frac{\bb{\sigma_i}(\x)\bb{\sigma_j}(\y)}{\lvert
            \bb{\Sigma_{ij}}(\x,\y)\rvert^{1/2}}\int_0^\infty
                    \exp\left(-t(\x-\y)^\top\bb{\Sigma_{ij}}(\x,\y)^{-1}(\x-\y)\right)
                    g_{ij}(t,\x,\y)d\mu(t),
\end{eqnarray*}
for a matrix-valued
kernel $\bb{g}(t,\x,\y)=
\left(g_{ij}(t,\x,\y)\right)_{i,j=1,\dots,m}$, which is positive
definite for all $t>0$, and 
 positive functions $\sigma_1,\dots,\sigma_m$. 
Specifying $\bb{g}$ via Theorem~\ref{thm:schoenberg} gives the following non-stationary covariance model which allows for a nice interpretation thereafter.

\begin{prop} \label{paciorekconstruction} 
  Let $\G: \RR^d \times \RR^d \rightarrow \RR^{m \times m}$ be a conditionally negative definite kernel. 
  Let $\mu$ denote a measure on $(0,\infty)$, and let $\bb{\Sigma_i}, \bb{\Sigma_j}: \RR^d \rightarrow \RR^{m \times m}$ be functions whose values are symmetric and positive definite matrices, and denote $\bb{\Sigma_{ij}}(\x,\y):=\left(\bb{\Sigma_i}(\x) + \bb{\Sigma_j}(\y)\right)/2$, $i,j=1,\dots,m$. Then the kernel $\C: \RR^d \times \RR^d \rightarrow \RR^{m \times m}$, defined via
  \begin{eqnarray*}
    C_{ij}(\x,\y)
    &=&
        \frac{\lvert\bb{\Sigma_i}(\x)\lvert^{1/4}\lvert\bb{\Sigma_j}(\y)\lvert^{1/4}}{\lvert        \bb{\Sigma_{ij}}(\x,\y)\rvert^{1/2}}
        \times
    \\&&\int_0^\infty \exp\left(-t\left((\x-\y)^\top\bb{\Sigma_{ij}}(\x,\y)^{-1}(\x-\y)+\gamma_{ij}(\x,\y)\right)\right)d\mu(t),
  \end{eqnarray*}
  $\x,\y \in \RR^d$, $i,j=1,\dots,m$, is positive definite, provided the integral exists.
\end{prop}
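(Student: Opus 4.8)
The plan is to pull the $t$-independent prefactor $|\bb{\Sigma_i}(\x)|^{1/4}|\bb{\Sigma_j}(\y)|^{1/4}/|\bb{\Sigma_{ij}}(\x,\y)|^{1/2}$ into the integral and to recognize the integrand, for each fixed $t>0$, as the componentwise product
$$\underbrace{\frac{|\bb{\Sigma_i}(\x)|^{1/4}|\bb{\Sigma_j}(\y)|^{1/4}}{|\bb{\Sigma_{ij}}(\x,\y)|^{1/2}}\exp\!\left(-t(\x-\y)^\top\bb{\Sigma_{ij}}(\x,\y)^{-1}(\x-\y)\right)}_{=:K^{(t)}_{ij}(\x,\y)}\cdot\exp\!\left(-t\gamma_{ij}(\x,\y)\right)$$
of two matrix-valued kernels. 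Since the componentwise product of positive definite matrix-valued kernels is again positive definite, and integration against the non-negative measure $\mu$ preserves positive definiteness whenever the integral converges, it suffices to show that both factors are positive definite for every $t>0$. The second factor is immediate from Theorem~\ref{thm:schoenberg}: as $\G$ is conditionally negative definite, $\exp^\ast(-t\G)=(\exp(-t\gamma_{ij}(\x,\y)))_{i,j=1,\dots,m}$ is positive definite for all $t>0$.

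The real content is the positive definiteness of the matrix-valued Paciorek--Schervish kernel $\bb{K}^{(t)}$, which I would establish through a Gaussian convolution representation. Writing $N(\cdot;\bb{m},\bb{P})$ for the $d$-variate Gaussian density with mean $\bb{m}$ and covariance $\bb{P}$, the product identity $\int_{\RR^d}N(\uu;\x,\bb{A})N(\uu;\y,\bb{B})\,d\uu=N(\x;\y,\bb{A}+\bb{B})$ with $\bb{A}=\bb{\Sigma_i}(\x)/(4t)$ and $\bb{B}=\bb{\Sigma_j}(\y)/(4t)$ gives $\bb{A}+\bb{B}=\bb{\Sigma_{ij}}(\x,\y)/(2t)$, which reproduces both the exponent (using $(\bb{A}+\bb{B})^{-1}=2t\,\bb{\Sigma_{ij}}(\x,\y)^{-1}$) and the determinant prefactor (using $|\bb{A}+\bb{B}|^{-1/2}=(2t)^{d/2}|\bb{\Sigma_{ij}}(\x,\y)|^{-1/2}$). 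Hence, with a strictly positive constant $c_t=(2\pi)^{d/2}(2t)^{-d/2}$ not depending on $i,j,\x,\y$,
$$K^{(t)}_{ij}(\x,\y)=c_t\int_{\RR^d}\psi_i^{\x}(\uu)\,\psi_j^{\y}(\uu)\,d\uu,\qquad \psi_i^{\x}(\uu):=|\bb{\Sigma_i}(\x)|^{1/4}\,N\!\left(\uu;\x,\tfrac{1}{4t}\bb{\Sigma_i}(\x)\right),$$
and the required symmetry $K^{(t)}_{ij}(\x,\y)=K^{(t)}_{ji}(\y,\x)$ is manifest.

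Positive definiteness of $\bb{K}^{(t)}$ is then a one-line computation: for any $n\in\NN$, $\bb{x_1},\dots,\bb{x_n}\in\RR^d$ and $\bb{a_1},\dots,\bb{a_n}\in\RR^m$,
$$\sum_{k=1}^n\sum_{l=1}^n\bb{a_k}^\top\bb{K}^{(t)}(\bb{x_k},\bb{x_l})\bb{a_l}=c_t\int_{\RR^d}\Bigl(\sum_{k=1}^n\sum_{i=1}^m a_{k,i}\,\psi_i^{\bb{x_k}}(\uu)\Bigr)^{2}d\uu\ \ge\ 0.$$
Combining the two factors by the componentwise product and integrating against $\mu$ yields the positive definiteness of $\C$ (this also displays the model as the Kleiber-type form from the preceding discussion, with $g_{ij}(t,\x,\y)=\exp(-t\gamma_{ij}(\x,\y))$ and $\sigma_i(\x)=|\bb{\Sigma_i}(\x)|^{1/4}$). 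I expect the Gaussian convolution representation of $\bb{K}^{(t)}$ to be the only non-routine step; once the feature maps $\psi_i^{\x}$ are in place, the remainder rests entirely on the closure properties of positive definite matrix-valued kernels already used throughout the paper.
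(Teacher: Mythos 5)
Your proof is correct and follows essentially the same route as the paper: factor the integrand for fixed $t$ into the Schur product of the non-stationary Gaussian-type kernel $\bb{K}^{(t)}$ and $\exp^\ast(-t\G)$ (the latter positive definite by Theorem~\ref{thm:schoenberg}), then integrate against $\mu$. The only difference is that the paper delegates the positive definiteness of $\bb{K}^{(t)}$ to Theorem~1 of \citet{kleiber2012nonstationary}, whereas you reprove it via the Gaussian convolution identity with feature maps $\psi_i^{\x}$ --- which is in fact the same kernel-convolution argument underlying that cited result, so your write-up is simply more self-contained.
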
  

\begin{proof}
	The proof of Theorem~1 in \citet{kleiber2012nonstationary}
        shows that
        $$(\x,\y) \mapsto
        \left(\frac{\lvert\bb{\Sigma_i}(\x)\lvert^{1/4}\lvert\bb{\Sigma_j}(\y)\lvert^{1/4}}{\lvert
            \bb{\Sigma_{ij}}(\x,\y)\rvert^{1/2}}
          \exp\left(-t(\x-\y)^\top\bb{\Sigma_{ij}}(\x,\y)^{-1}(\x-\y)\right)\right)_{i,j=1,\dots,m},
        $$
        $\x,\y \in \RR^d$,
        is a positive definite matrix-valued kernel for all $t>0$. Hence, the kernel
	$$(\x,\y) \mapsto
        \left(\frac{\lvert\bb{\Sigma_i}(\x)\lvert^{1/4}\lvert\bb{\Sigma_j}(\y)\lvert^{1/4}}{\lvert
            \bb{\Sigma_{ij}}(\x,\y)\rvert^{1/2}}
          e^{-t\left((\x-\y)^\top\bb{\Sigma_{ij}}(\x,\y)^{-1}(\x-\y)+\gamma_{ij}(x,y)\right)}\right)_{i,j=1,\dots,m},$$
        is also positive definite for all $t>0$ due to Theorem~\ref{thm:schoenberg} as Schur product of positive definite kernels. Since mixtures of positive definite kernels are again positive definite, the assertion follows.  
\end{proof}

\begin{example} \label{ex:maternnonstat} 	 
	Let the conditionally negative definite kernel $\G$ be of the form \begin{equation} \label{eq:maternnonstatcnd}
	 \gamma_{ij}(\x,\y)= \frac{\log(t)}t\frac{\nu_i(\x)}2+\frac{\log(t)}t\frac{\nu_j(\y)}2-G_{ij}(\x,\y), \quad i,j=1,\dots,m,
	 \end{equation}for a positive definite kernel $\bb{G}$ with non-negative entries and some measurable positive functions $\nu_i$, $i=1,\dots,m$, $t>0$. Choosing $d\mu(t)=t^{-1}\exp(-1/(4t))dt$, and following the calculations in \citet{kleiber2012nonstationary} shows that the kernel $\bb{C}: \RR^d \times \RR^d \rightarrow \RR^{m \times m}$, defined via
	 \begin{eqnarray*}
	C_{ij}(\x,\y) &=& \displaystyle{\frac{\lvert\bb{\Sigma_i}(\x)\lvert^{1/4}\lvert\bb{\Sigma_j}(\y)\lvert^{1/4}}{\lvert \bb{\Sigma_{ij}}(\x,\y)\rvert^{1/2}}2^{\nu_i(\x)/2+\nu_j(\y)/2}\times} \\ &&\displaystyle{M_{\nu_i(\x)/2+\nu_j(\y)/2}\left(\sqrt{(\x-\y)^\top\bb{\Sigma_{ij}}(\x,\y)^{-1}(\x-\y)+G_{ij}(\x,\y)} \right)}, 
	 \end{eqnarray*} $i,j=1,\dots,m$, is positive definite. Here, $M_\nu$ denotes the Whittle-Mat\'ern covariance function with smoothness parameter $\nu$ \citep{matern1986}. For $G_{ij}(\x,\y) \equiv 0$, $i,j=1,\dots,m$, we essentially recover the non-stationary Whittle-Mat\'{e}rn model in \citet{kleiber2012nonstationary}.
\end{example}

Example~\ref{ex:maternnonstat} has a nice interpretation regarding the use of the more general conditionally negative definite kernel~\eqref{eq:maternnonstatcnd} in comparison with a solely additive separable structure. Here, the additive separable part of the conditionally negative definite matrix-valued kernel acts on the parameters of the underlying covariance function, whereas the positive definite part only affects the measurement of the distances between two locations $\x$ and $\y$, by augmenting the Mahalanobis distance. The same effect occurs when adopting the non-stationary Cauchy model in \citet[Equation 18]{kleiber2015}. Since the conditionally negative definite kernel~\eqref{eq:maternnonstatcnd} should be a general one to recover a Mat\'{e}rn-type model in the framework of Proposition~\ref{paciorekconstruction}, cf.\,Theorem~\ref{thm:cndstructure}, the additive separable structure $(\nu_i(\x)+\nu_j(\y))/2$ seems to be the only option to model the smoothness with the approach above.

Additive separable conditionally negative definite kernels are also used in Corollary~2 in \citet{alsultan2019}. The latter is based on Theorem~2 in \citet{alsultan2019}, which can be concretized via Theorem~\ref{thm:schoenberg}. In fact, the matrix-valued kernels $\bb{g}=(g_{ij})_{i,j=1,\dots,m}$ given there are exactly the positive conditionally negative definite matrix-valued kernels as introduced in Section~\ref{sec:prelim}. Similarly, various other cross-covariances in \citet{ma2013mittag} and \citet{balakrishnan2015}, for instance, can be generalized by replacing the variogram used there with a pseudo cross-variogram. 

\section{Derivative Related Results}

In this section, we present neat constructions of positive definite matrix-valued functions involving derivatives, which can also be transferred to the multivariate case via pseudo cross-variograms. Our central tool here is Lemma \ref{lem:kernel}, which enables us to prove the following multivariate versions of Corollaries 3.12.8 and 3.12.9 in \citet{sasvari2013multivariate}, cf.\,also \citet{gneiting2001}, and Corollary~3 in \citet{ma2005spatio}. 

\begin{prop}\label{secondderiv}
	Let $\G: \RR^d \rightarrow \RR^{m \times m}$ be a pseudo cross-variogram with twice continuously differentiable component functions $\gamma_{ij}$, $i,j=1,\dots,m$. Then the function $\C: \RR^d \rightarrow \RR^{m \times m}$, defined via
	
	$$ C_{ij}(\h)= \frac{\partial^{2}}{\partial h_k^2}\gamma_{ij}(\h), \quad \h\in \RR^d, \quad i,j=1,\dots,m, $$ is positive definite for $k=1,\dots,d$.
\end{prop}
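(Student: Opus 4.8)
The plan is to realize the second partial derivative as a limit of the second-order difference operator that already appears in Lemma~\ref{lem:kernel}, and then to invoke the fact that pointwise limits of positive definite matrix-valued functions are again positive definite. The key observation is that Lemma~\ref{lem:kernel}, applied with $\bb{z}=\tau\bb{e_k}$ where $\bb{e_k}$ is the $k$-th standard basis vector and $\tau>0$, already tells us that
\begin{equation*}
\h \mapsto \G(\h+\tau\bb{e_k})+\G(\h-\tau\bb{e_k})-2\G(\h)
\end{equation*}
is a cross-covariance function for every $\tau>0$. Since the class of positive definite matrix-valued functions is a convex cone, multiplying this by the positive scalar $\tau^{-2}$ preserves positive definiteness. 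Thus each member of the family
\begin{equation*}
\C_\tau(\h):=\frac{\G(\h+\tau\bb{e_k})+\G(\h-\tau\bb{e_k})-2\G(\h)}{\tau^2},\qquad \tau>0,
\end{equation*}
is positive definite.

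First I would fix $k\in\{1,\dots,d\}$ and write down the family $\C_\tau$ above. Next I would observe that, by the twice continuous differentiability of each component $\gamma_{ij}$, the standard central-difference quotient converges entrywise as $\tau\to 0$:
\begin{equation*}
(\C_\tau)_{ij}(\h)=\frac{\gamma_{ij}(\h+\tau\bb{e_k})+\gamma_{ij}(\h-\tau\bb{e_k})-2\gamma_{ij}(\h)}{\tau^2}\longrightarrow \frac{\partial^2}{\partial h_k^2}\gamma_{ij}(\h)=C_{ij}(\h).
\end{equation*}
Finally, since positive definiteness of a matrix-valued function is defined by a system of inequalities $\sum_{i,j}\bb{a_i}^\top\C(\bb{x_i}-\bb{x_j})\bb{a_j}\ge 0$ over finitely many points and vectors, each such inequality is preserved under the entrywise limit, so the pointwise limit $\C$ inherits positive definiteness. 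This gives the claim.

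The only point that requires a little care, rather than being a genuine obstacle, is the interchange of the limit with the (finite) quadratic-form sum: for fixed $n$, points $\bb{x_1},\dots,\bb{x_n}$ and vectors $\bb{a_1},\dots,\bb{a_n}$, the sum $\sum_{i,j}\bb{a_i}^\top\C_\tau(\bb{x_i}-\bb{x_j})\bb{a_j}$ is nonnegative for every $\tau>0$, and it converges as $\tau\to 0$ to the corresponding sum for $\C$ because it is a fixed finite linear combination of the entrywise-convergent quantities $(\C_\tau)_{ij}$. A nonnegative quantity has a nonnegative limit, so the limiting sum is again nonnegative. No uniformity in the points or vectors is needed, precisely because positive definiteness is tested one finite configuration at a time. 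Twice continuous differentiability guarantees the existence of the second derivative and the convergence of the symmetric difference quotient, so no further regularity assumptions are required.
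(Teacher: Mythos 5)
Your proof is correct and takes essentially the same route as the paper's: both apply Lemma~\ref{lem:kernel} with $\bb{z}=\varepsilon\bb{e_k}$, divide the resulting second-order central difference by $\varepsilon^2$, and conclude via the closedness of positive definite matrix-valued functions under pointwise limits. Your additional remarks on the interchange of the limit with the finite quadratic form are a welcome elaboration of a step the paper leaves implicit, but the argument is the same.
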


\begin{proof} We follow the univariate proof in \citet{sasvari2013multivariate}. Let $\{\bb{e_1},\dots,\bb{e_d}\}$ be the canonical basis in $\RR^d$. Then we have \begin{align*}
	\C(\h)= \lim_{\varepsilon \rightarrow 0}\frac{\G(\h + \varepsilon \bb{e_k}) + \G(\h-\varepsilon \bb{e_k}) - 2\G(\h)}{\varepsilon^2}. 
	\end{align*}
	Due to Lemma~\ref{lem:kernel}, $\C$ is positive definite as pointwise limit of positive definite functions.
\end{proof}

\begin{cor}
	Let $\mathcal{L}$ be the continuous extension of a completely monotone function on $[0,\infty)$. Let $\G: \RR^d \times \RR \rightarrow \RR^{m \times m}$ be a pseudo cross-variogram of a spatio-temporal random field with twice continuously differentiable component functions. Then, the function $\C: \RR^d \times \RR \rightarrow \RR^{m \times m}$, defined via $$C_{ij}(\h,u) = \mathcal{L}(\gamma_{ij}(\h,u)) \frac{\partial^{2}}{\partial u^2}\gamma_{ij}(\h,u)+ \mathcal{L}^\prime(\gamma_{ij}(\h,u))\left(\frac{\partial}{\partial u}\gamma_{ij}(\h,u)\right)^2, \quad i,j=1,\dots,m, $$ is positive definite.
\end{cor}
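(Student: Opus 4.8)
**

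The plan is to realize $C_{ij}(\h,u)$ as the second $u$-derivative of a suitable composition, so that Proposition~\ref{secondderiv} applies directly. The key observation is that the stated formula is precisely $\frac{\partial^2}{\partial u^2}\mathcal{L}(\gamma_{ij}(\h,u))$ computed via the chain rule: differentiating $\mathcal{L}(\gamma_{ij})$ once in $u$ gives $\mathcal{L}'(\gamma_{ij})\frac{\partial}{\partial u}\gamma_{ij}$, and differentiating again yields exactly $\mathcal{L}(\gamma_{ij})\frac{\partial^2}{\partial u^2}\gamma_{ij}+\mathcal{L}'(\gamma_{ij})\left(\frac{\partial}{\partial u}\gamma_{ij}\right)^2$. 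Wait—one must be careful: the first term should read $\mathcal{L}'(\gamma_{ij})\frac{\partial^2}{\partial u^2}\gamma_{ij}$ from the chain rule, not $\mathcal{L}(\gamma_{ij})\frac{\partial^2}{\partial u^2}\gamma_{ij}$. So the composition whose second derivative produces the stated expression is not $\mathcal{L}\circ\gamma_{ij}$ but rather a function $F$ with $F'=\mathcal{L}$; that is, I would take $F$ to be an antiderivative of $\mathcal{L}$, so that $F'=\mathcal{L}$ and $F''=\mathcal{L}'$, making $\frac{\partial^2}{\partial u^2}F(\gamma_{ij})=\mathcal{L}(\gamma_{ij})\frac{\partial^2}{\partial u^2}\gamma_{ij}+\mathcal{L}'(\gamma_{ij})\left(\frac{\partial}{\partial u}\gamma_{ij}\right)^2$, matching the claim exactly.

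With this identification, the strategy is as follows. First I would verify that $F(t):=\int_0^t\mathcal{L}(s)\,ds$, the antiderivative of the completely monotone function $\mathcal{L}$ vanishing at zero, is a Bernstein function: indeed $F'=\mathcal{L}\ge 0$ is completely monotone, which is exactly the defining property of a Bernstein function. Second, I would invoke the fact (used already in the proof of the Proposition following Lemma~\ref{lem:kernel}) that Bernstein functions vanishing at the origin operate on pseudo cross-variograms, so that $(\h,u)\mapsto\left(F(\gamma_{ij}(\h,u))\right)_{i,j=1,\dots,m}$ is again a pseudo cross-variogram on $\RR^d\times\RR$. Third, since this new pseudo cross-variogram inherits twice continuous differentiability from $\G$ (as $F$ is smooth on $(0,\infty)$ because $\mathcal{L}$ is completely monotone, hence infinitely differentiable on the open half-line), I would apply Proposition~\ref{secondderiv} with the temporal coordinate in the role of the distinguished variable $h_k$, concluding that $\frac{\partial^2}{\partial u^2}F(\gamma_{ij}(\h,u))$ is positive definite. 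Finally, computing this second derivative via the chain rule reproduces $C_{ij}$, finishing the proof.

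The main obstacle I anticipate is not conceptual but a matter of regularity bookkeeping. Proposition~\ref{secondderiv} is stated for pseudo cross-variograms on $\RR^d$ with twice continuously differentiable components, and here the relevant variable is the single temporal coordinate $u\in\RR$; I must ensure that viewing $(\h,u)\mapsto F(\gamma_{ij}(\h,u))$ as a pseudo cross-variogram on $\RR^{d}\times\RR\cong\RR^{d+1}$ and differentiating in the last coordinate is a legitimate instance of that proposition. This is fine, but one must confirm that $F\circ\gamma_{ij}$ is genuinely twice continuously differentiable in $u$; the subtlety is behavior where $\gamma_{ij}$ approaches $0$, since $\mathcal{L}$ and $\mathcal{L}'$ are controlled only by the completely monotone property on $[0,\infty)$ via the stated continuous extension. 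Because $\mathcal{L}$ extends continuously to $0$ and $\gamma_{ij}$ is assumed twice continuously differentiable, the composition is twice continuously differentiable wherever $\mathcal{L}$ is, and the diagonal points $\gamma_{ii}(\0,0)=0$ require only that $\mathcal{L}(0)$ and the one-sided limit of $\mathcal{L}'$ exist, which follows from complete monotonicity; so this technical point resolves cleanly.
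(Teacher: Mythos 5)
Your proposal is correct and follows essentially the same route as the paper: the paper likewise forms the antiderivative $\int_0^{\gamma_{ij}(\h,u)}\mathcal{L}(y)\,dy$, notes that this Bernstein function vanishing at zero yields another pseudo cross-variogram, and then differentiates twice in $u$ via Proposition~\ref{secondderiv}. Your identification of the chain-rule subtlety (that one must compose with a primitive $F$ of $\mathcal{L}$, not with $\mathcal{L}$ itself) is exactly the point the paper's terse proof relies on.
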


\begin{proof}
	We follow the arguments in \citet{ma2005spatio}. Since Bernstein functions vanishing at zero also operate on pseudo cross-variograms \citep{berg1984harmonic, doerr2021characterization}, the matrix-valued function $$(\h,u) \mapsto \left(\int_0^{\gamma_{ij}(\h,u)} \mathcal{L}(y) dy\right)_{i,j=1,\dots,m},$$
	is again a pseudo cross-variogram. Differentiating with respect to $u$ and applying Proposition~\ref{secondderiv} then gives the result.
\end{proof}

\begin{prop} \label{deriv_sasvari} 
	Let $\G$ be an isotropic pseudo cross-variogram in $\RR^{d+1}$ with $\gamma_{ij}(\h)= g_{ij}(\lVert \h \rVert^2)$, $i,j=1,\dots,m$, where $\bb{g}: [0,\infty) \rightarrow \RR^{m \times m}$ is a continuous matrix-valued function with twice continuously differentiable component functions in $[0,\infty)$. Then the function $\C: \RR^d \rightarrow \RR^{m\times m}$, defined via $$C_{ij}(\h)=g_{ij}^\prime\left(\lVert\h\rVert^2\right), \quad  i,j=1,\dots,m,$$ is positive definite on $\RR^d$. 
\end{prop}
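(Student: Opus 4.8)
The plan is to reduce the statement about $g_{ij}'(\lVert\h\rVert^2)$ on $\RR^d$ to the second-derivative result already available through Lemma~\ref{lem:kernel} (and its packaging in Proposition~\ref{secondderiv}). The key observation is that for an isotropic pseudo cross-variogram $\gamma_{ij}(\h)=g_{ij}(\lVert\h\rVert^2)$ on $\RR^{d+1}$, differentiating along a single coordinate direction produces exactly the composition with $g_{ij}'$ that we want, since $\frac{\partial}{\partial h_k}\gamma_{ij}(\h)=2h_k\,g_{ij}'(\lVert\h\rVert^2)$ and a second differentiation brings the desired $g_{ij}'$ into view. So the strategy is: work in $\RR^{d+1}$, single out the last coordinate $h_{d+1}$, and evaluate the second $h_{d+1}$-derivative of $\gamma_{ij}$ on the hyperplane $h_{d+1}=0$.

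First I would apply Proposition~\ref{secondderiv} in dimension $d+1$ with the coordinate $k=d+1$ to conclude that the function on $\RR^{d+1}$ given by $(\h,h_{d+1})\mapsto \frac{\partial^2}{\partial h_{d+1}^2}\gamma_{ij}(\h,h_{d+1})$ is a positive definite matrix-valued function; this uses the assumed twice continuous differentiability of the $g_{ij}$ and is legitimate because the radial form $g_{ij}(\lVert(\h,h_{d+1})\rVert^2)$ inherits the required smoothness. Next I would compute this second derivative explicitly. Writing $r^2=\lVert\h\rVert^2+h_{d+1}^2$, the chain rule gives
\begin{equation*}
\frac{\partial^2}{\partial h_{d+1}^2}\,g_{ij}(r^2)=2\,g_{ij}'(r^2)+4h_{d+1}^2\,g_{ij}''(r^2).
\end{equation*}
Evaluating at $h_{d+1}=0$ kills the second term and leaves $2\,g_{ij}'(\lVert\h\rVert^2)$, precisely twice the claimed kernel, restricted to the hyperplane $h_{d+1}=0$.

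Finally I would invoke the standard fact that the restriction of a positive definite function on $\RR^{d+1}$ to the subspace $\{h_{d+1}=0\}\cong\RR^d$ is again positive definite (this holds componentwise and hence for the matrix-valued quadratic form, since one simply restricts the evaluation points), so $\h\mapsto 2\,g_{ij}'(\lVert\h\rVert^2)$ is positive definite on $\RR^d$; dividing by the positive constant $2$ preserves positive definiteness and yields the assertion. The only genuinely delicate point is the interchange of the derivative and the hyperplane restriction, i.e.\ making sure the second-order difference quotient underlying Proposition~\ref{secondderiv} still converges after restricting to $h_{d+1}=0$; this is where I expect the main technical care to lie, and it is handled by the continuity of $g_{ij}''$ on $[0,\infty)$, which guarantees the limit in Proposition~\ref{secondderiv} exists uniformly enough on the compact sets of evaluation points used in the positive-definiteness inequality. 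Everything else is a routine chain-rule computation and the elementary restriction argument.
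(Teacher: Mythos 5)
Your proof is correct and takes essentially the same route as the paper: both arguments difference the isotropic pseudo cross-variogram along the extra $(d+1)$-st coordinate direction via Lemma~\ref{lem:kernel} and then restrict to the hyperplane $h_{d+1}=0$, the only difference being the order of operations --- the paper restricts first, so that the second difference collapses to $2\bigl(g_{ij}(\lVert\bb{\tilde h}\rVert^2+\varepsilon^2)-g_{ij}(\lVert\bb{\tilde h}\rVert^2)\bigr)$ and only a single derivative of $g_{ij}$ is ever needed, whereas you pass to the limit in $\RR^{d+1}$ first (via Proposition~\ref{secondderiv}) and restrict afterwards. The ``delicate point'' you flag at the end is actually vacuous in your ordering: Proposition~\ref{secondderiv} already delivers a positive definite function on all of $\RR^{d+1}$, and restricting a positive definite kernel to a subset of evaluation points is automatic, so no interchange of limit and restriction needs to be justified.
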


\begin{proof} We follow the univariate proof in \citet{sasvari2013multivariate}.
	Let $\varepsilon \in \RR$. According to Lemma~\ref{lem:kernel}, the function $$ \h \mapsto \left(g_{ij}\left(\lVert \h+ \varepsilon \bb{e_{d+1}}  \rVert^2\right)+ g_{ij}\left(\lVert \h -  \varepsilon \bb{e_{d+1}} \rVert^2\right) -2 g_{ij}\left(\lVert \h\rVert^2\right)\right)_{i,j=1,\dots,m},$$ 
	is a positive definite matrix-valued function; so is its restriction to $\RR^d$, which reads 
	$$\bb{\tilde h} \mapsto  2\left(g_{ij}\left(\lVert \bb{\tilde h}\rVert^2 + \varepsilon^2\right) -g_{ij}\left(\lVert \bb{\tilde h}\rVert^2\right)\right)_{i,j=1,\dots,m}.$$ The assertion follows by dividing by $\varepsilon^2$ and letting $\epsilon \rightarrow 0$.  
\end{proof}

\section{Infinite Divisibility}

As pointed out in \citet{berg1984harmonic}, Schoenberg's theorem relates conditionally negative definite functions to so-called infinitely divisible positive definite functions. In accordance to Definition~3.2.6 in \citet{berg1984harmonic}, a matrix-valued positive definite function $\C: \RR^d \rightarrow \RR^{m \times m}$ is infinitely divisible, if for each $n\in \NN$, there exists a positive definite matrix-valued function $\C_n$ such that $\C=\C_n^{\ast n}$, which is equivalent to $\C^{\ast r}$ being positive definite for all $r >0$, see Proposition~3.2.7 in \citet{berg1984harmonic}. As a particular example, we extend a construction in \citet{kosaki2008} to the multivariate case which provides a variety of different infinitely divisible positive definite matrix-valued functions. 

\begin{lemma} \label{lem:kosaki} 
	Let $\G: \RR^d \rightarrow \RR^{m \times m}$ be a pseudo cross-variogram. Let $a>0$, $b\ge 0$. Then the function $\C: \RR^d \rightarrow \RR^{m\times m}$, defined via
	$$C_{ij}(\h) = \frac{1+b\gamma_{ij}(\h)}{1+ a\gamma_{ij}(\h)},  \quad i,j=1,\dots,m,$$ is infinitely divisible if and only if $a \ge b$.
\end{lemma}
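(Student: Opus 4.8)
The plan is to linearize everything through Theorem~\ref{thm:schoenberg}. Since $f(t):=\frac{1+bt}{1+at}>0$ for all $t\ge 0$ (as $a>0,\ b\ge 0$), every entry $C_{ij}=f(\gamma_{ij})$ is strictly positive and $\log C_{ij}$ is well defined. By definition $\C$ is infinitely divisible iff $\C^{\ast r}$ is positive definite for all $r>0$, and because $C_{ij}^{\,r}=\exp\!\bigl(-r\,(-\log C_{ij})\bigr)$, applying Theorem~\ref{thm:schoenberg} to the kernel $(-\log C_{ij})_{i,j}$ shows that this holds if and only if
$$ \h\mapsto \bigl(\psi(\gamma_{ij}(\h))\bigr)_{i,j=1,\dots,m},\qquad \psi(t):=-\log f(t)=\log(1+at)-\log(1+bt), $$
is a conditionally negative definite kernel. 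The whole question thus reduces to deciding for which $a,b$ the scalar map $\psi$ turns $\G$ into a conditionally negative definite kernel.

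For the direction $a\ge b$, I would show that $\psi$ is a Bernstein function vanishing at the origin. Each map $t\mapsto\log(1+ct)$ is Bernstein with L\'evy density $s\mapsto e^{-s/c}/s$, so that
$$ \psi(t)=\int_0^\infty \bigl(1-e^{-ts}\bigr)\,\frac{e^{-s/a}-e^{-s/b}}{s}\,ds . $$
The density $\frac{e^{-s/a}-e^{-s/b}}{s}$ is nonnegative for all $s>0$ precisely when $1/a\le 1/b$, i.e.\ when $a\ge b$ (with the convention $e^{-s/b}=0$ for $b=0$); hence $\psi$ is genuinely Bernstein and $\psi(0)=0$. Since Bernstein functions vanishing at zero operate on pseudo cross-variograms \citep{berg1984harmonic, doerr2021characterization}, the kernel $(\psi(\gamma_{ij}))_{i,j}$ is again a pseudo cross-variogram, in particular conditionally negative definite, and the reduction above yields infinite divisibility of $\C$.

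For the converse I would argue through the sign of $\psi$ on the diagonal. If $a<b$, then $1+at<1+bt$ for every $t>0$, so $\psi(t)<0$ for all $t>0$. On the other hand, conditional negative definiteness of $(\psi(\gamma_{ij}))_{i,j}$, together with $\psi(\gamma_{ii}(\0))=\psi(0)=0$, forces each diagonal entry to satisfy $\psi(\gamma_{ii}(\h))\ge 0$ for all $\h$: testing the defining inequality at the two locations $\0,\h$ with the admissible weights $\bb{e_i},-\bb{e_i}\in\RR^m$ collapses it, using $\gamma_{ii}(\h)=\gamma_{ii}(-\h)$, to $-2\psi(\gamma_{ii}(\h))\le 0$. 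As soon as some diagonal variogram $\gamma_{ii}$ takes a positive value — which holds for any non-degenerate $\G$ — these two facts contradict one another, so $(\psi(\gamma_{ij}))_{i,j}$ cannot be conditionally negative definite and $\C$ is not infinitely divisible.

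The routine parts are the two Frullani/Laplace identities behind the representation of $\psi$; the conceptual crux is the reduction in the first paragraph, after which everything hinges on the single scalar inequality $e^{-s/a}\ge e^{-s/b}\iff a\ge b$ governing the sign of the L\'evy density, and on the matching sign of $\psi$ itself driving the necessity argument. The one point requiring care is excluding the degenerate case $\G\equiv\0$, where $\C\equiv\bb{1_m}\bb{1_m}^\top$ is trivially infinitely divisible for any $a,b$; accordingly the ``only if'' part should be read for non-degenerate pseudo cross-variograms.
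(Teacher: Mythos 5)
Your proof is correct in its overall architecture but takes a genuinely different route from the paper's, most visibly in the sufficiency direction. For $a\ge b$ the paper never passes through $-\log^\ast\C$: it writes $x^r=\frac{\sin(\pi r)}{\pi}\int_0^\infty\frac{x}{x+\lambda}\lambda^{r-1}\,d\lambda$ for $r\in(0,1)$, decomposes $C_{ij}^{\,r}$ as a mixture of the functions $\h\mapsto\bigl(1+\lambda+(a\lambda+b)\gamma_{ij}(\h)\bigr)^{-1}$ with non-negative weights $\frac{b}{a\lambda+b}$ and $\frac{\lambda(a-b)}{a\lambda+b}$ (non-negativity of the latter is exactly where $a\ge b$ enters), and invokes positive definiteness of reciprocals of one plus a conditionally negative definite function. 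Your reduction via Theorem~\ref{thm:schoenberg}, combined with the observation that $\psi(t)=\log(1+at)-\log(1+bt)$ is a Bernstein function vanishing at zero precisely when $a\ge b$, is cleaner and somewhat more informative: it exhibits $-\log^\ast\C$ as itself a pseudo cross-variogram and isolates the role of $a\ge b$ in a single sign condition on a L\'evy density. Both arguments ultimately lean on the same black boxes already used elsewhere in the paper (Schoenberg's theorem and the action of Bernstein functions vanishing at zero on pseudo cross-variograms).

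In the necessity direction there is a small but genuine gap. You test conditional negative definiteness of $\bigl(\psi(\gamma_{ij})\bigr)_{i,j}$ only through diagonal entries, deduce $\psi(\gamma_{ii}(\h))\ge0$, and then need some $\gamma_{ii}$ to take a positive value; but $\G\not\equiv\0$ does not guarantee this. A pseudo cross-variogram can have all diagonal variograms identically zero while some off-diagonal entry is a strictly positive constant: take $Z_i(\x)\equiv X_i$ for random variables with $\Var(X_i-X_j)>0$, so that $\gamma_{ii}\equiv0$ but $\gamma_{ij}\equiv\frac12\Var(X_i-X_j)>0$. There your two-point diagonal test yields no contradiction, even though the conclusion still holds (test the weight $\bb{e_i}-\bb{e_j}$ at a single location instead). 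The paper's one-line argument sidesteps this: positive definiteness forces $C_{ij}(\h)^2\le C_{ii}(\0)C_{jj}(\0)=1$, and since $\gamma_{ij}(\h)\ge0$ this gives $b\le a$ as soon as some entry of $\G$ is positive somewhere. Your caveat about the fully degenerate case $\G\equiv\0$ is fair and applies equally to the paper's own proof, where the ``only if'' direction is vacuous.
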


\begin{proof}
	We follow the proof of the univariate version in \citet{kosaki2008}. Assume first that $\C$ is infinitely divisible. Then $\C$ is positive definite and, thus, has to fulfill the inequality $C_{ij}(h)^2 \le C_{ii}(0)C_{jj}(0)$, implying $a \ge b$. For the if part, it suffices to show that $\C^{\ast r}$ is positive definite for $r \in (0,1)$ due to the stability of positive definite matrix-valued functions under Schur products and pointwise limits.  
	For $r \in (0,1)$, we have 
	$$x^r= \frac{\sin(\pi r)}{\pi}\int_0^\infty \frac{x}{x + \lambda} \lambda^{r-1} d\lambda, \quad x \ge 0.$$ We thus have 
	$$\left(\frac{1+b\gamma_{ij}(\h)}{1+ a\gamma_{ij}(\h)}\right)^r = \frac{\sin(\pi r)}{\pi}\int_0^\infty \left(\frac{b}{a\lambda+b} +\frac{\lambda(a-b)}{a\lambda+b}\cdot \frac1{1+ \lambda + (a\lambda+b)\gamma_{ij}(\h)}\right)\lambda^{r-1} d\lambda.   $$
	Since $\lambda$ and $a\lambda+b$ are non-negative, and the set of conditionally negative definite functions forms a convex cone, the function $$\h \mapsto \left(\lambda + (a\lambda+b)\gamma_{ij}(\h)\right)_{i,j=1,\dots,m},$$ is conditionally negative definite. Hence, the function $$\h \mapsto \left(\frac1{1+ \lambda + (a\lambda+b)\gamma_{ij}(\h)}\right)_{i,j=1,\dots,m},  $$ is positive definite, cf.\,\citet{berg1984harmonic} or Corollary~3.6 in \citet{doerr2021characterization}. Since $\frac{b}{a\lambda+b}$ and $\frac{\lambda(a-b)}{a\lambda+b}$ are non-negative, and the set of positive definite matrix-valued functions also forms a convex cone, the function $$\h \mapsto \left(\frac{b}{a\lambda+b} +\frac{\lambda(a-b)}{a\lambda+b}\cdot \frac1{1+ \lambda + (a\lambda+b)\gamma_{ij}(\h)}\right)_{i,j=1,\dots,m},  $$ is positive definite for all $\lambda >0$. Consequently, $\C^{\ast r}$ is positive definite as a mixture of positive definite functions.
\end{proof}

	\begin{theorem}\label{thm:MultKosaki}
		Let $\G: \RR^d \rightarrow \RR^{m\times m}$ be a pseudo cross-variogram. Let $f$ be an entire function taking real values for the reals. Assume that
		\begin{itemize}
			\item $f(0)>0$ and $f^\prime(0)=0$;
			\item all the zeros of f are purely imaginary;
			\item the order $\rho$ of $f$ is less than 2, i.e.,\, $$ \rho = \limsup_{r \rightarrow \infty} \frac{\log\log M(r)}{\log r} <2 $$ with $M(r)=\max\{\lvert f(z)\rvert; \lvert z \rvert=r \}. $
		\end{itemize}
		Then the function $\C: \RR^d \rightarrow \RR^{m \times m}$, defined by $$C_{ij}(\h)=\frac{f(\nu \sqrt{\gamma_{ij}(\h)})}{f(\sqrt{\gamma_{ij}(\h)})}, \quad i,j=1,\dots,m,$$ is infinitely divisible for $\nu \in [0,1]$. 
	\end{theorem}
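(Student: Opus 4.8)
The plan is to reduce the statement to Lemma~\ref{lem:kosaki} by factorizing $f$ as an (essentially infinite) product of quadratic factors, each of which is one of the ratios treated there. Since $f$ is entire of order $\rho<2$, its genus is at most one, so Hadamard's factorization theorem applies and yields a representation $f(z)=f(0)e^{cz}\prod_k(1-z/z_k)e^{z/z_k}$, the product running over the nonzero zeros $z_k$ of $f$ (there is no zero at the origin because $f(0)>0$). The first task is to pin down the precise shape of this product using the remaining hypotheses.

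Because $f$ is real on the reals we have $\overline{f(\bar z)}=f(z)$, so the zero set is invariant under complex conjugation; as all zeros are purely imaginary, they occur in conjugate pairs $\pm i\beta_k$ with $\beta_k\in\RR\setminus\{0\}$. Pairing conjugate zeros, the genus-one exponential factors cancel and each pair contributes $(1-z/(i\beta_k))(1+z/(i\beta_k))=1+z^2/\beta_k^2$. Differentiating $\log f$ at the origin and using $f'(0)=0$ forces the linear term to vanish, i.e.\ $c=0$. Hence $f(z)=f(0)\prod_k(1+z^2/\beta_k^2)$, an even function, and the order bound $\rho<2$ guarantees $\sum_k\beta_k^{-2}<\infty$, which is exactly what is needed for convergence of the product. (If $f$ has no zeros at all, then $f\equiv f(0)$ and $C_{ij}\equiv 1$, which is trivially infinitely divisible.)

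Substituting $z=\sqrt{\gamma_{ij}(\h)}$ and $z=\nu\sqrt{\gamma_{ij}(\h)}$, which is legitimate since $\gamma_{ij}(\h)\ge 0$ for a pseudo cross-variogram, the constant $f(0)$ cancels and I obtain
$$C_{ij}(\h)=\prod_k\frac{1+(\nu^2/\beta_k^2)\gamma_{ij}(\h)}{1+(1/\beta_k^2)\gamma_{ij}(\h)}.$$
Each factor has the form appearing in Lemma~\ref{lem:kosaki} with $a_k=1/\beta_k^2$ and $b_k=\nu^2/\beta_k^2$; since $\nu\in[0,1]$ we have $a_k\ge b_k\ge 0$, so each factor is an infinitely divisible positive definite matrix-valued function. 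It then remains to pass from the factors to their product. Finite componentwise products of infinitely divisible functions are infinitely divisible, because $(\prod_{k\le N}\C^{(k)})^{\ast r}=\prod_{k\le N}(\C^{(k)})^{\ast r}$ is a Schur product of positive definite kernels for every $r>0$; letting $N\to\infty$ and invoking the stability of positive definiteness under pointwise limits (already used in Lemma~\ref{lem:kosaki}) shows that $\C^{\ast r}$ is positive definite for all $r>0$, i.e.\ $\C$ is infinitely divisible.

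The main obstacle is the complex-analytic first step: carefully justifying the Hadamard factorization and, in particular, the cancellation of the exponential factors under conjugate pairing together with the vanishing of the linear term $c$, as well as the convergence of the resulting infinite product from $\rho<2$. Once the clean representation $f(z)=f(0)\prod_k(1+z^2/\beta_k^2)$ is secured, the probabilistic content is supplied entirely by Lemma~\ref{lem:kosaki} and the cone/limit stability of the relevant function classes.
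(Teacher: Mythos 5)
Your proof is correct and takes essentially the same route as the paper: the paper's own proof simply cites the proof of Theorem~2 in Kosaki (2008) for the product representation $C_{ij}(\h)=\lim_{k\to\infty}\prod_{n=1}^{k}\bigl(1+\nu^2\gamma_{ij}(\h)/\alpha_n^2\bigr)/\bigl(1+\gamma_{ij}(\h)/\alpha_n^2\bigr)$ and then concludes via Lemma~\ref{lem:kosaki} together with stability of infinite divisibility under componentwise products and pointwise limits, exactly as you do. The only difference is that you re-derive, via Hadamard factorization, the step the paper outsources to that citation; your derivation (conjugate pairing of the purely imaginary zeros cancelling the genus-one exponential factors, $f'(0)=0$ killing the linear term, and $\rho<2$ giving $\sum_k\beta_k^{-2}<\infty$) is sound.
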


\begin{proof}
	The proof of Theorem~2 in \citet{kosaki2008} shows that $\C$ can be written as
	$$C_{ij}(\h)= \lim_{k \rightarrow \infty} \prod_{n=1}^k \left(\frac{1+\nu^2\gamma_{ij}(\h)/\alpha_n^2}{1+\gamma_{ij}(\h)/\alpha_n^2}\right), \quad \h \in \RR^d, \quad i,j=1,\dots,m, $$
	for some $\alpha_n >0$. Since componentwise products and pointwise limits of infinitely divisible positive definite matrix-valued functions are infinitely divisible again, the assertion follows due to Lemma~\ref{lem:kosaki}.
\end{proof}

\begin{example}
	The function $t \mapsto \cosh(t)$ fulfills the conditions in Theorem~\ref{thm:MultKosaki} \citep[Remark 4]{kosaki2008}. Hence, the function $\C: \RR^d \rightarrow \RR^{m \times m}$
	\begin{equation} \label{eq:buellmult} C_{ij}(\h) = \frac{\cosh(\nu \sqrt{\gamma_{ij}(\h))}}{\cosh(\sqrt{\gamma_{ij}(\h))}}, \quad i,j=1,\dots,m, 
	\end{equation} is a valid cross-covariance function for $\nu \in [0,1]$. 
	Using a general non-negative conditionally negative definite kernel instead of a pseudo cross-variogram in Equation~\eqref{eq:buellmult} shows that model~\eqref{eq:buellmult} generalizes model~(10) in \citet{ma2013mittag}.  
	For $\nu=0$, we have $$C_{ij}(\h) = \frac{1}{\cosh(\sqrt{\gamma_{ij}(\h)})},\quad i,j=1,\dots,m, $$ which generalizes a univariate covariance model proposed in a meterological context \citep{buell1972correlation, gneiting1999correlation}. More examples of functions which can be used in Theorem~\ref{thm:MultKosaki} can be found in \citet{kosaki2008}. 
\end{example}

\section*{Acknowledgment}

The authors gratefully acknowledge support by the German Research Foundation (DFG) through the Research Training Group RTG 1953.

\end{document}